\theoremstyle{plain}
\newtheorem{thm}{Theorem}[section]
\newtheorem*{thmA}{Theorem~A}
\newtheorem{cor}{Corollary}[section]
\newtheorem{lem}{Lemma}[section]
\theoremstyle{remark}
\theoremstyle{plain}
\numberwithin{equation}{section}
\def\va{\varepsilon}
\def\C{} \renewcommand\C{\mathbb{C}}
\newcommand\N{\mathbb{N}}
\newcommand\R{\mathbb{R}}
\renewcommand\S{\mathbb{S}}
\newcommand\B{\mathbb{B}}
\newcommand\cE{\mathcal{E}}
\newcommand\cL{\mathcal{L}}
\renewcommand\Im{\operatorname{Im}}
\newcommand\sign{\operatorname{sign}}
\newcommand\supp{\operatorname{supp}}
\newcommand\Cdot{\mathop\cdot}
\def \s{\sigma}
\def \sph{\mathbb{S}^d}
\def\RR{\mathbb{R}}
\newcommand\HH{\mathcal{H}}
\DeclareMathOperator{\dist}{dist}
\def\f{\frac}
\def\o{\omega}
\def\SS{\mathbb{S}}
\def\la{{\langle}}
\def\ra{{\rangle}}
\def\NN{\mathbb{N}}
\newcommand\wh{\widehat}
\def\va{\varepsilon}
\def\sub{\substack}
\def\Bl{\Bigl} \def\Br{\Bigr}
\def\bl{\bigl}
\def\br{\bigr}
\def\sa{\sigma}
\def\al{{\alpha}}
\def\a{{\alpha}}
\def\be{{\beta}}
\def\ld{\lambda}
\def\s{{\sigma}}
\def\va{\varepsilon}
\def\ZZ{{\mathbb Z}}
\def\Ga{\Gamma}
\def\CC{\mathbb{C}}
\begin{document}

\title[]{Estimates of the asymptotic  Nikolskii constants for~spherical polynomials}

\author{Feng Dai}
\address{F.~Dai, Department of Mathematical and Statistical Sciences\\
University of Alberta\\ Edmonton, Alberta T6G 2G1, Canada.}
\email{fdai@ualberta.ca}

\author{Dmitry~Gorbachev}
\address{D.~Gorbachev, Tula State University,
Department of Applied Mathematics and Computer Science, 300012 Tula, Russia.}
\email{dvgmail@mail.ru}

\author{Sergey Tikhonov}
\address{S.~Tikhonov, Centre de Recerca Matem\`atica\\
Campus de Bellaterra, Edifici~C 08193 Bellaterra (Barcelona), Spain; ICREA, Pg.
Llu\'is Companys 23, 08010 Barcelona, Spain, and Universitat Aut\`onoma de
Barcelona.}
\email{stikhonov@crm.cat}

\thanks{F.~D. was supported
by NSERC Canada under the grant RGPIN 04702 Dai. D.~G. was supported by the
Russian Science Foundation under grant 18-11-00199. S.~T. was partially
supported by MTM 2014-59174-P and 2014 SGR 289.
The authors would like to thank the Isaac Newton Institute for Mathematical Sciences, Cambridge, for support and hospitality during the programme Approximation, sampling and compression in data science
 where work on this paper was partially undertaken. This work was also supported by EPSRC grant no EP/K032208/1.}

\keywords{Spherical harmonics, entire functions of  exponential type,
Nikolskii inequalities,  best  Nikolskii constants, Bessel functions}

\subjclass[2010]{33C55, 33C50, 42B15, 42C10}
\begin{abstract}
Let $\Pi_n^d$ denote the space of spherical polynomials of degree at most $n$
on the unit sphere $\mathbb{S}^d\subset \mathbb{R}^{d+1}$ that is  equipped with the
surface Lebesgue measure $d\sigma$  normalized by $\int_{\mathbb{S}^d} \,
d\sigma(x)=1$. This paper  establishes a close connection between   the
asymptotic Nikolskii constant,
$$ \mathcal{L}^\ast(d):=\lim_{n\to \infty} \frac 1 {\dim \Pi_n^d} \sup_{f\in
\Pi_n^d} \frac { \|f\|_{L^\infty(\mathbb{S}^d)}}{\|f\|_{L^1(\mathbb{S}^d)}},$$ and
the following   extremal problem:
$$ \mathcal{I}_\alpha:=\inf_{a_k} \Bigl\| j_{\alpha+1} (t)-  \sum_{k=1}^\infty  a_k
j_{\alpha} \bigl( q_{\alpha+1,k}t/q_{\alpha+1,1}\bigr)\Bigr\|_{L^\infty(\mathbb{R}_+)}
$$
with the infimum being taken over all sequences $\{a_k\}_{k=1}^\infty\subset
\mathbb{R}$ such that the infinite series converges absolutely a.e. on $\mathbb{R}_+$. Here
$j_\alpha $ denotes the Bessel function of the first kind normalized so that
$j_\alpha(0)=1$, and $\{q_{\alpha+1,k}\}_{k=1}^\infty$ denotes the strict increasing
sequence of all positive zeros of $j_{\alpha+1}$. We prove  that for $\alpha\ge
-0.272$,
$$\mathcal{I}_\alpha=
\frac{\int_{0}^{q_{\alpha+1,1}}j_{\alpha+1}(t)t^{2\alpha+1}\,dt}{\int_{0}^{q_{\alpha+1,1}}t^{2\alpha+1}\,dt}=
{}_{1}F_{2}\Bigl(\alpha+1;\alpha+2,\alpha+2;-\frac{q_{\alpha+1,1}^{2}}{4}\Bigr).
$$
As a result, we deduce that the constant $\mathcal{L}^\ast(d)$ goes to zero
exponentially fast as $d\to\infty$:
\[
0.5^d\le \mathcal{L}^{*}(d)\le (0.857\cdots)^{d\,(1+\varepsilon_d)}   \     \     \   \   \    \text{with $\varepsilon_d =O(d^{-2/3})$.}
\]

  \end{abstract}

\date{\today}
\maketitle

\section{Introduction}

Let $\SS^{d}=\{x\in \mathbb{R}^{d+1}\colon |x|=1\}$ denote the unit sphere of
$\RR^{d+1}$, and $d\s$  the  surface Lebesgue  measure on $\sph$ normalized by $\int_{\sph} d\s(x)=1$,  where $|\Cdot|$ denotes the Euclidean norm of $\RR^{d+1}$.  Denote by $\o_d:=\f {2\pi^{\f {d+1}2}}{\Ga(\f {d+1}2)}$  the surface area of $\sph$.
Given $0<p\le \infty$, let $L^p(\SS^{d})$ denote  the
Lebesgue $L^p$-space defined with respect to the measure $d\sigma$ on
$\SS^{d}$, and $\|\Cdot\|_p=\|\Cdot\|_{L^{p}(\SS^{d})}$ the { quasi-norm} of
$L^p(\SS^{d})$.  We denote by $R_k^{(\al,\be)}$, $\al,\be\in\RR$, the usual Jacobi polynomial of degree $n$  normalized by $R_k^{(\al,\be)}(1)=1$, and by $C_k^\mu$, $\mu>0$, the  Gegenbauer polynomial of degree $k$.

 A spherical polynomial of degree at most $n$ on $\sph$ is the restriction to $\sph$  of an algebraic  polynomial
in $d+1$ variables of total degree at most $n$.
Let $\Pi_n^d$ denote the space of all spherical polynomials of
degree at most $n$ on $\SS^{d}$.  As is well known (see, e.g.,
\cite[Chap.~1]{DX13}),
\begin{equation}\label{1-1-19}
\dim \Pi_{n}^{d}=\frac{2n+d}{n+d}\binom{n+d}{n}=
\frac{2n^{d}}{\Gamma(d+1)}\,(1+O(n^{-1})),\quad
n\to\infty.
\end{equation}

The   classical Nikolskii inequality for spherical polynomials (\cite{Ka84}) asserts that  there exists a positive  constant $C_d$ depending only on the dimension $d$ such that
for any  $0<p<q\leq \infty$,
\[
\|f\|_{L^q(\sph)} \leq C_d \bl( \dim \Pi_n^d\br)^{\f 1p-\f 1q} \|f\|_{L^p(\sph)},\quad \forall\,f\in \Pi_n^d.
\]
In this paper, we are mainly interested in the  best Nikolskii constant defined as follows:
\begin{equation}\label{1-5}
\mathcal N(\SS^d; n)_{p,q}:=\sup\Bigl\{\|f\|_{L^q(\SS^{d})}
\colon\ f\in\Pi_n^{d}\quad
\text{and}\quad \|f\|_{L^p(\SS^{d})}=1\Bigr\},
\end{equation}
where  $0<p<q\leq \infty$ and $n\in\mathbb{N}$.

Exact values of the constants $\mathcal N(\SS^d; n)_{p,q}$ are known only in the case of $p=2$ and $q=\infty$, where one has  (see \cite{DGT18})
\begin{equation}\label{1-5-0}
\mathcal{N}(\SS^d; n)_{2,\infty} =\sqrt{\text{dim}\ \Pi_n^d}.
\end{equation}
For the  general case,  the following estimates are  known (\cite{Ka84, DGT18}):
\begin{equation}\label{Nikol}
0<c_d< \f{\mathcal{N}(\SS^d; n)_{p,q}}{(\dim \Pi_n^d)^{\frac 1p-\frac 1q}}\leq C_d<\infty,\quad 0<p<q\leq \infty,
\end{equation}
where $C_d=1$  in the case of $0<p\leq 2$.   However,
 it is a long-standing open problem  to find the exact values of the Nikolskii  constants
$\mathcal N (\sph; n)_{p,q}$  for $(p,q)\ne (2,\infty)$ and $0<p<q\le
\infty$.   In fact, this problem is open even in the case of $d=1$  (\cite{AD14, Go05}).


For $d=1$,
Levin and
Lubinsky \cite{LL15a, LL15b}  established very  close connections between the asymptotic behaviour of the quantity  $\f{\mathcal{N}(\SS^1; n)_{p,q}}{(2n+1)^{\f 1p-\f1q}}$ as $n\to \infty$  and   the best  Nikolskii constant for  entire functions of exponential type  on $\RR$. Their results were recently extended to the higher-dimensional case  by the current authors \cite{DGT18}.
To state these results more precisely,   we   recall   that
an entire function $f$ of $d$-complex variables is  said to be of  spherical exponential type at most $\sa>0$ if for every $\va>0$ there exists a constant $A_\va>0$ such that
$|f(z)|\leq A_\va e^{(\sa+\va)|z|}$ 
for all $z=(z_1, \cdots, z_d)\in\CC^d$. Given $0<p\leq \infty$, we denote by  $\mathcal{E}_{p}^{d}$ the class of all entire functions $f$ in $d$-variables
of  spherical exponential type at most  $1$  whose restrictions to $\RR^d$  belong to  the space $L^p(\RR^d)$ 
(\cite[Ch.~3]{Ni75}).
For $0<p<q\leq \infty$, define
\[
\mathcal{N}(\RR^d)_{p,q}:=\sup\Bl\{\|f\|_{L^q(\RR^d)}\colon\ f\in\mathcal{E}_{p}^{d}\ \
\text{and}\ \ \|f\|_{L^p(\RR^d)}=1\Br\}.
\]
{Throughout this paper, we will consider the Nikolskii constants for real-valued functions (i.e., those functions in $\mathcal{E}_p^d$ whose restrictions to $\mathbb{R}^d$ are real-valued). This will not cause any problem as for every
  $f\in\mathcal{E}_p^d$,
$g(z):=\frac12\,(f(z)+\overline{f(\bar{z})})$ is a function in   $\mathcal{E}_p^d$ whose restriction to $\RR^d$ is real-valued   (see also \cite[Theorem
1.1]{GT17}).}

The following result was  proved first  by   Levin and Lubinsky \cite{LL15a,
LL15b} for $d=1$ and  later by the current authors \cite{DGT18} for $d\ge
2$.\;\footnote{Note that  the definition of the  constant
$\mathcal{N}(\sph,n)_{p,q}$ here is slightly different from that of the
constant  $C(n,d,p,q)$  in \cite{DGT18}  due to the normalization of the
surface Lebesgue measure $d\sa$.
   Indeed, we have  $\mathcal{N}(\sph; n)_{p,q} =\o_d^{\f1p-\f1q} C(n,d,p,q)$. }


\begin{thmA}[\cite{LL15a, LL15b, DGT18}] 
     For $0< p<\infty$, we have
    \begin{equation}\label{1-6-0}
  \lim_{n\to \infty} \frac{\mathcal{N}(\sph; n)_{ p,\infty}} {(\dim\Pi_n^d)^{\f1p}}=\Bl(\f {(2\pi)^d}{V_d}\Br)^{\f1p}\mathcal{N}(\RR^d)_{p, \infty}=:  \mathcal{L}^\ast_{p,\infty}(d),
    \end{equation}
    where $V_d:=\f{\pi^{d/2}}{\Ga(\f d2+1)}$ denotes the volume of the unit ball in $\RR^d$. Furthermore,  if  $0< p<q\le \infty$, then
    \[
      \liminf_{n\to \infty} \frac{\mathcal {N}(\sph; n)_{p,q}}{(\dim\Pi_n^d)^{\f1p-\f1q}}\ge \Bl(\f {(2\pi)^d}{V_d}\Br)^{\f1p-\f1q}
    \mathcal{N}(\RR^d)_{p,q}=:\mathcal{L}^\ast_{p,q}(d).
    \]
\end{thmA}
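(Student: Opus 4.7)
The strategy is to exploit the local Euclidean structure of $\sph$: on scales of order $1/n$ near any fixed point, the sphere is approximately flat and a spherical polynomial of degree at most $n$ behaves like an entire function of spherical exponential type at most $1$. Made quantitative, this translates the extremal problem on $\sph$ into the extremal problem on $\RR^d$, and the factor $((2\pi)^d/V_d)^{1/p-1/q}$ arises from matching the change-of-variables constants with the asymptotic \eqref{1-1-19} via the elementary identity $\o_d\cdot\Ga(d+1)/2=(2\pi)^d/V_d$, which follows from the Legendre duplication formula.

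For the lower bound (which alone suffices to establish the $\liminf$ assertion), I would proceed by explicit construction. Fix $e\in\sph$ and let $\tau$ denote the tangential projection onto the hyperplane $e^\perp\cong\RR^d$, so that $d\sa=\o_d^{-1}(1+O(|\tau|^2))\,dy$ near $e$. Given $f\in\cE_p^d$ of spherical exponential type strictly less than $1$, fix $\va\in(0,1)$ and a smooth cutoff $\eta$ supported in a small neighborhood of $0\in\RR^d$, and set $f_n(x):=\eta(\tau(x))f((1-\va)n\tau(x))$. A de la Vall\'ee Poussin-type spherical convolution then produces $P_n\in\Pi_n^d$ approximating $f_n$ with negligible error in the $L^p$ and $L^q$ quasi-norms after the appropriate rescaling. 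The substitution $y=(1-\va)n\tau(x)$ yields
\[
\|P_n\|_p \;\sim\; \o_d^{-1/p}\bigl((1-\va)n\bigr)^{-d/p}\|f\|_p,
\]
and analogously for $q<\infty$ (with $\|P_n\|_\infty\to\|f\|_\infty$ when $q=\infty$). Dividing by $(\dim\Pi_n^d)^{1/p-1/q}$ via \eqref{1-1-19}, invoking the identity from the previous paragraph, and sending $n\to\infty$, then $\va\to 0$, and finally taking the supremum over admissible $f$, produces the desired lower bound.

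For the matching upper bound when $q=\infty$, I would take a near-extremal sequence $P_n\in\Pi_n^d$ with $\|P_n\|_p=1$, choose $x_n\in\sph$ maximizing $|P_n|$, and use the rotational invariance of the Nikolskii constant and of $d\sa$ to assume $x_n=e$. Consider the rescalings $F_n(y):=P_n(\tau^{-1}(y/n))$ on compact subsets of $\RR^d$. The spherical Bernstein inequality, iterated along the $d$ tangential directions, extends each $F_n$ to an entire function of $d$ complex variables of spherical exponential type at most $1+o(1)$, and furnishes uniform local derivative bounds that make $\{F_n\}$ a normal family. Extracting a locally uniform limit $F$ entire of spherical exponential type at most $1$, the reverse of the change of variables above together with Fatou's lemma yields $\|F\|_{L^p(\RR^d)}^p\le \limsup_n \o_d\,n^d\|P_n\|_p^p$, while $|F(0)|=\lim_n |P_n(x_n)|=\lim_n \|P_n\|_\infty$. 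Combining these two bounds and dividing by $(\dim\Pi_n^d)^{1/p}$ produces the matching upper bound and completes the equality in \eqref{1-6-0}.

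The principal obstacle is the upper bound argument: showing that $\{F_n\}$ is a normal family whose limit lies in $\cE_p^d$ with sharp control of its $L^p$ norm. This requires promoting the real-variable spherical Bernstein estimate to a complex exponential-type bound, together with a uniform Christoffel-type asymptotic for $\Pi_n^d$ that prevents escape of mass in the Fatou step. This is essentially the content of Levin--Lubinsky's one-dimensional argument in \cite{LL15a,LL15b}, adapted in \cite{DGT18} to the higher-dimensional spherical setting.
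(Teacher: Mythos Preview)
The paper does not prove Theorem~A; it is quoted from \cite{LL15a,LL15b,DGT18} as a known background result, so there is no in-paper argument to compare against beyond those citations. Your outline is indeed the Levin--Lubinsky scheme (chart localisation, transplantation of entire functions for the lower bound, normal-family compactness for the upper bound), and this is exactly what is carried out in \cite{DGT18} for $d\ge 2$. The constant bookkeeping via $\omega_d\,\Gamma(d+1)/2=(2\pi)^d/V_d$ is correct.

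There is, however, a genuine slip in the upper-bound sketch. You normalise the near-extremal polynomials by $\|P_n\|_p=1$ and set $F_n(y)=P_n(\tau^{-1}(y/n))$. With that normalisation,
\[
|F_n(0)|=\|P_n\|_\infty\sim \mathcal{N}(\sph;n)_{p,\infty}\asymp n^{d/p}\to\infty,
\]
so $\{F_n\}$ is \emph{not} locally bounded and cannot form a normal family; your sentence ``$|F(0)|=\lim_n\|P_n\|_\infty$'' is then inconsistent with a finite limit $F$, and the Fatou bound $\|F\|_{L^p(\RR^d)}^p\le \limsup_n \omega_d\,n^d\|P_n\|_p^p$ becomes the vacuous $\|F\|_p^p\le\infty$. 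The cure is to normalise by $\|P_n\|_\infty=1$ (equivalently, divide $F_n$ by $\|P_n\|_\infty$). Then Bernstein gives a genuine normal family with $|F(0)|=1$, the change of variables plus Fatou gives
\[
\|F\|_{L^p(\RR^d)}^p\le \liminf_{n}\,\omega_d\,n^{d}\,\|P_n\|_p^p,
\]
and the right-hand side is now finite because $\|P_n\|_p\asymp n^{-d/p}$. Taking the ratio $|F(0)|/\|F\|_p$ and inserting \eqref{1-1-19} yields the upper bound for \eqref{1-6-0}. With this correction (and the caveats you already flag about promoting Bernstein to a complex exponential-type bound and controlling escape of mass), the plan is sound and matches the literature.
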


Note that \eqref{1-6-0} implies that
\begin{equation}\label{1-8}
\mathcal{N}(\sph; n)_{p,\infty} = \mathcal {L}^\ast_{p, \infty} (d) \bl(\dim \Pi_n^d\br)^{\f1p}  \bl( 1+o(1)\br),\quad 0<p<\infty,\quad \text{as $n\to\infty$. }
\end{equation}
Furthermore, by \eqref{1-5-0}, \eqref{1-1-19} and \eqref{1-8}, we obtain
\[
\mathcal{N}(\RR^d)_{2,\infty}=
\f {\sqrt{V_d}} {(2\pi)^{d/2}}=
\Bigl(\frac{1}{2^d \Ga (\f d2+1) \pi^{d/2}}\Bigr)^{1/2}.
\]

In this paper,  we  continue the research of  \cite{DGT18}.  We shall establish more explicit duality formulas for the constants $\mathcal{N}(\sph; n)_{p,\infty}$ and $\mathcal{N}(\RR^d)_{p,\infty}$ with $1\leq p<\infty$.  For example,  in Section~\ref{sec-dual}, we prove
\begin{equation}\label{1-8-b}
\mathcal{N}(\sph; n)_{1,\infty} =(\dim \Pi_n^d) \inf_{a_k}  \Bl\| R_n^{(\frac d2, \f {d-2}2)}-\sum_{k=n+1}^\infty a_k C_k ^{\f {d-1}2}\Br\|_{L^\infty [-1,1]}
\end{equation}
with the infimum being taken over all sequences $\{a_k\}_{k=n+1}^\infty \subset \RR$ such that the series $\sum_{k=n+1}^\infty a_k C_k^{\f {d-1}2}(t)$ converges  to an essentially bounded function  in $L^2$-norm with respect to the measure $(1-t^2)^{\f {d-2}2} dt$ on  $[-1,1]$.
 One of our main  goals  is to apply these duality formulas to  estimate  the constant $\mathcal{L}^\ast_{p,\infty} (d)$ in the asymptotic expansion \eqref{1-8}  for $p=1$.
For simplicity, we~write $$\mathcal{L}^\ast (d):
=\mathcal{L}^\ast_{1,\infty}(d).$$
Note that  by \eqref{1-5-0}  and \eqref{Nikol},   if   $  0<p\leq 2$,  then for any $d$,  \begin{equation}\label{1-8-00}
\cL^\ast_{p,\infty}(d)\leq 1
\end{equation} with equality for $p=2$.

The estimates for the Nikolskii constant $\cL^\ast (d)$ are important in many applications. Let us mention only a few of them here.  First of all,  the constant
$\mathcal{L}^\ast(d)$  appears very naturally in problems on
best $L^1$-approximation (see, e.g., \cite{BKP12, Ge38,GV14,LS17}).  It can  be
used to obtain certain  tight bounds in the Remez-type problem about the
concentration of $L^{1}$-norm of entire functions of the spherical exponential
type (\cite{BKP12,MR14}, see also \cite{TT17}).
 Some  details can be found  in  Section \ref{sec-remez}.
The next example is widely known.    The  constant $\cL^\ast (d)$ can be used   to
obtain some lower   tight-bounds for spherical designs (see   \cite{Lev98}).
  Moreover, the Nikolskii constants play an important role in approximation of  smooth,  multivariate  functions  defined  on  irregular  domains
 by polynomial  frame  approximation method \cite{ben}.
  More detailed historical comments on the constant $\cL^\ast (d)$ and related background information
  will be given in
 Section~\ref{sec-back}.

While  it remains to be very challenging to find the   exact values of the constants $\cL^{*}(d)$,
in \cite{DGT18} we
solved this problem  for non-negative functions in the class $\mathcal{E}_1^d$:
\begin{thm}[\cite{DGT18}] \label{thm-1-2}For $d\in \mathbb{N}$,  we have
  \[
  \cL^{+}(d) :=\f{(2\pi)^d}{V_d}
  \sup_{\sub{f\in\mathcal{E}_1^d\setminus \{0\},\\
      f\text{$\br|$}_{\RR^d}\ge 0}} \f{\|f\|_{L^\infty(\RR^d)}
  }{\|f\|_{L^1(\RR^d)}}=2^{-d}.
  \]
\end{thm}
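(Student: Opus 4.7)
The plan is to reduce this non-negative $L^1\!\to\!L^\infty$ extremal problem to the already evaluated $(2,\infty)$-Nikolskii problem via a squaring representation. If every non-negative $f\in\mathcal{E}_1^d\cap L^1(\RR^d)$ admits, on $\RR^d$, a Boas--Kac / Krein--Akhiezer factorization $f=|g|^2$ with $g\in\mathcal{E}_{1/2}^d\cap L^2(\RR^d)$, then $\|f\|_{L^\infty(\RR^d)}=\|g\|_{L^\infty(\RR^d)}^2$ and $\|f\|_{L^1(\RR^d)}=\|g\|_{L^2(\RR^d)}^2$, so
\[
\sup_{\substack{f\in\mathcal{E}_1^d\setminus\{0\}\\ f|_{\RR^d}\ge 0}}\frac{\|f\|_{L^\infty(\RR^d)}}{\|f\|_{L^1(\RR^d)}}\le \Bl(\sup_{h\in\mathcal{E}_{1/2}^d\cap L^2(\RR^d)}\frac{\|h\|_{L^\infty(\RR^d)}}{\|h\|_{L^2(\RR^d)}}\Br)^2.
\]
The dilation $h(\cdot)\mapsto h(2\cdot)$ rescales type $1/2$ to type $1$, which shows the inner supremum equals $2^{-d/2}\mathcal{N}(\RR^d)_{2,\infty}=2^{-d/2}\sqrt{V_d}/(2\pi)^{d/2}$. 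Squaring, and multiplying by the normalization $(2\pi)^d/V_d$ from the definition of $\cL^+(d)$, yields the upper bound $\cL^+(d)\le 2^{-d}$.

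\textbf{The factorization step.} The technical core is thus to produce $g\in\mathcal{E}_{1/2}^d\cap L^2(\RR^d)$ with $f=|g|^2$ on $\RR^d$. For $d=1$ this is the classical Boas--Kac theorem for non-negative integrable entire functions of exponential type. For $d\ge 2$ one cannot factor an arbitrary positive-definite Fourier transform (Rudin's counterexamples), so one first replaces $f$ by its $SO(d)$-symmetrization $f^{\sharp}$: this stays in $\mathcal{E}_1^d\cap L^1$, remains non-negative, and, after translating its maximum to the origin, can only increase the ratio $\|f\|_{L^\infty}/\|f\|_{L^1}$. It therefore suffices to establish the factorization for radial $f$. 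The radial profile of $f$ extends to an even entire function on $\CC$ of exponential type $1$ that is non-negative on $\RR$; the one-dimensional Boas--Kac theorem factors this profile as $|G|^2$ with $G$ even entire of type $1/2$ in $L^2(\RR)$, and Bochner's radial-reconstruction formula (equivalently, the Hankel-transform representation of radial Paley--Wiener functions) then assembles $G$ into the desired radial $g\in\mathcal{E}_{1/2}^d\cap L^2(\RR^d)$.

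\textbf{Sharpness.} Equality is realized by the Fej\'er-type function $f_0:=g_0^2$ with $g_0(x):=(2\pi)^{-d}\int_{|\xi|\le 1/2}e^{ix\cdot\xi}\,d\xi$, the (unnormalized) reproducing kernel at the origin of the Paley--Wiener space of type $1/2$. Since $\hat g_0=\chi_{\{|\xi|\le 1/2\}}$ is constant on a ball, Cauchy--Schwarz is an equality at $x=0$, so $g_0$ saturates $\mathcal{N}_{1/2}(\RR^d)_{2,\infty}$; a direct Plancherel calculation gives $\|g_0\|_{L^\infty(\RR^d)}^2/\|g_0\|_{L^2(\RR^d)}^2=V_d/(4\pi)^d$. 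The function $f_0$ is entire of spherical type $1$ (its Fourier transform is the autoconvolution $\chi_{\{|\xi|\le 1/2\}}*\chi_{\{|\xi|\le 1/2\}}$, supported in $\{|\xi|\le 1\}$), non-negative, integrable, and saturates every inequality of the strategy step, giving $\cL^+(d)\ge 2^{-d}$ and hence equality.

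\textbf{Main obstacle.} The delicate point is the multi-dimensional factorization: in dimension $d\ge 2$, Rudin's obstruction prevents a direct positive-definite-function argument, so the reduction to radial extremals (or an equivalent extremal-function argument) is genuinely needed, and it is there that most of the care has to go.
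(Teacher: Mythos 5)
Your lower bound is correct (the function $g_0=\mathcal{F}_d^{-1}\chi_{\{|\xi|\le 1/2\}}$, $f_0=g_0^2$, does give $\cL^+(d)\ge 2^{-d}$), the rotation-average-plus-translation reduction to radial functions is sound, and for $d=1$ the argument is complete, since there the plain Boas--Kac factorization suffices. The genuine gap is the factorization step for $d\ge 2$: it is \emph{not} true that an even entire function of exponential type $1$, non-negative on $\RR$, can be written as $|G|^2$ with $G$ \emph{even} of type $1/2$, and evenness is exactly what you need to assemble a radial $g(x)=G(|x|)$ of spherical type $1/2$. Indeed, if $G$ is even and $|G|^2=f_0$ on $\RR$, then $GG^*=f_0$ on $\CC$ with $G^*(z)=\overline{G(\bar z)}$, and $\operatorname{ord}_{i}G^*=\operatorname{ord}_{-i}G=\operatorname{ord}_{i}G$, so $\operatorname{ord}_i f_0=2\operatorname{ord}_i G$ must be even. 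But the class contains radial functions whose profile has a \emph{simple} zero at $\pm i$: for instance $f(x)=(1+|x|^2)\bigl(\frac{\sin(|x|/(2m))}{|x|/(2m)}\bigr)^{2m}$ with $m$ large is radial, non-negative, of spherical exponential type $1$ and in $L^1(\RR^d)$, yet admits no even Boas--Kac factor. Worse, the defect cannot be repaired by allowing a non-radial $g$: if $f=|g|^2=(\Re g)^2+(\Im g)^2$ with $g$ band-limited, then both $\Re g$ and $\Im g$ vanish to order at least $m$ on each sphere where the sine factor vanishes; dividing out the complexified quadrics one is left with two real entire functions of exponential type zero and polynomial growth, i.e.\ polynomials $u,v$ with $u^2+v^2=1+|x|^2$, and $1+|x|^2$ is not a sum of two squares of real polynomials when $d\ge 2$. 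So the squaring route to the upper bound breaks for some admissible $f$ --- this is precisely the Rudin-type obstruction you flag, and radial symmetrization does not remove it.

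For comparison: the paper does not prove the theorem here but quotes \cite{DGT18}, and the known proofs avoid any pointwise factorization of $f$. They work on the Fourier side: for $f\ge 0$ in $\mathcal{E}_1^d$ one has $\|f\|_{L^1}=\wh f(0)$ and (after translation) $\|f\|_{L^\infty}=f(0)=(2\pi)^{-d}\int \wh f$, where $\wh f$ is continuous, positive definite and supported in the unit ball; the inequality $\int\wh f\le 2^{-d}V_d\,\wh f(0)$ is the solution of the Tur\'an extremal problem for the ball (Gorbachev; Kolountzakis--R\'evesz), whose extremizer is $\chi_{\B^{d}_{1/2}}\ast\chi_{\B^{d}_{1/2}}$ --- the Fourier transform of your extremal example. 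If you want to salvage your write-up, replace the factorization step by this positive-definiteness/Tur\'an argument; the rest of your computation (the $2^{-d/2}$ dilation bookkeeping and the sharpness example) is fine.
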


One of the main results in this paper  asserts   that   the estimate \eqref{1-8-00}  can be significantly improved  for $p=1$, and the constant $\cL^\ast(d)$ 
goes to zero  exponentially fast as $d\to\infty$:

\begin{thm}\label{thm-main-as}
  For $d\in\N$, we have
  \[
  2^{-d}\le \cL^{*}(d)\le {}_{1}F_{2}\Bigl(\frac d2;\frac d2+1,\frac d2+1;
  -\frac{\beta_{d}^{2}}{4}\Bigr)=
  \f{\int_{0}^{\beta_d}j_{d/2}(t)t^{d-1}\,dt}{\int_0^{\be_d} t^{d-1}\, dt},\]
  where
  $_{1}F_{2}$  denotes the usual hypergeometric function,
  { $j_{d/2}$ is the normalized Bessel function},
  and $\beta_{d}=q_{d/2, 1}$ is  the smallest positive
  zero of the Bessel function $J_{d/2}$ of  the first kind.
\end{thm}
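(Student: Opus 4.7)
The lower bound $2^{-d}\le \cL^{*}(d)$ is immediate from Theorem~\ref{thm-1-2}: since the set of nonnegative functions in $\mathcal{E}_1^d$ is a subclass of $\mathcal{E}_1^d$, we have $\cL^{*}(d)\ge \cL^{+}(d)=2^{-d}$. For the upper bound, the plan is to identify $\cL^{*}(d)$ with the value $\mathcal{I}_{d/2-1}$ appearing in the abstract (taking $\alpha=d/2-1$, so that $\alpha+1=d/2$ and $q_{\alpha+1,1}=\beta_d$), and then apply the explicit formula for $\mathcal{I}_\alpha$.

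The identification proceeds in two reductions. First, given $f\in\mathcal{E}_1^d$ nearly extremal for $\mathcal{N}(\RR^d)_{1,\infty}$, I would translate so that $|f|$ nearly attains its supremum at the origin and then replace $f$ by its rotational average over $SO(d)$; this preserves the exponential type, preserves the value $f(0)$, and does not increase $\|f\|_1$, so we may restrict to radial functions $f(x)=F(|x|)$. By Paley--Wiener for radial functions, such $F$ corresponds via the Hankel transform of order $d/2-1$ to an even function supported in $[0,1]$. Second, adapting the $L^{1}$-duality behind~(\ref{1-8-b}) to the Euclidean setting---either by letting $n\to\infty$ in~(\ref{1-8-b}) with Theorem~A and the Mehler--Heine asymptotics at the edge $t=1$ (which convert $R_n^{(d/2,(d-2)/2)}$ and $C_k^{(d-1)/2}$ to Bessel functions of orders $d/2$ and $d/2-1$, respectively), or by running the same duality directly on $\RR_+$ with weight $t^{d-1}$---one obtains
$$
\cL^{*}(d)=\inf_{\{a_k\}}\Bl\| j_{d/2}(t)-\sum_{k=1}^\infty a_k\, j_{d/2-1}(q_{d/2,k}\,t/\beta_d)\Br\|_{L^{\infty}(\RR_+)}=\mathcal{I}_{d/2-1},
$$
where $j_{d/2}$ encodes the support constraint $[0,1]$ of the Hankel transform and $\{j_{d/2-1}(q_{d/2,k}\,t/\beta_d)\}_{k\ge 1}$ is the natural orthogonal system on $[0,\beta_d]$ capturing the free parameters of the dual approximation problem.

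Finally, invoking the main theorem on $\mathcal{I}_\alpha$ from the abstract (valid for $\alpha\ge-0.272$, which covers all $d\ge 2$; the case $d=1$, where $\alpha=-1/2$, is the edge case that must be handled separately), we conclude
$$
\mathcal{I}_{d/2-1}=\f{\int_{0}^{\beta_d} j_{d/2}(t)\,t^{d-1}\,dt}{\int_{0}^{\beta_d} t^{d-1}\,dt}={}_1F_2\Bl(\tfrac d2;\tfrac d2+1,\tfrac d2+1;-\tfrac{\beta_d^2}{4}\Br),
$$
where the hypergeometric identity follows by term-by-term integration of the power series of $j_{d/2}$. The principal obstacle is the exact evaluation of $\mathcal{I}_\alpha$ itself: one needs both an explicit admissible sequence $\{a_k\}$ attaining the claimed infimum and a matching lower bound, presumably via a Chebyshev-type alternation argument for the extremizer $t\mapsto j_{d/2}(t)\mathbf{1}_{[0,\beta_d]}(t)$ (suitably extended), and this alternation argument is precisely what forces the restriction $\alpha\ge-0.272$. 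By contrast, the symmetrization in Stage~1 and the duality reduction in Stage~2 are essentially routine adaptations of the spherical arguments behind~(\ref{1-8-b}).
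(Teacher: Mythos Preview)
Your overall architecture matches the paper's: lower bound from Theorem~\ref{thm-1-2}, upper bound by duality plus Theorem~\ref{thm-5-1}. But you over-claim when you write $\cL^{*}(d)=\mathcal{I}_{d/2-1}$. The duality argument---whether obtained by letting $n\to\infty$ in~\eqref{1-8-b} or directly on $\RR_+$---identifies $\cL^{*}(d)$ with the $L^\infty$-distance from $j_{d/2}$ to the \emph{full} annihilator $\mathcal{E}_{\infty}^{\bot}$ of the radial $\mathcal{E}_{1}^{d}$-functions; this is Theorem~\ref{prop-f*}\,(ii). The space $X_{d/2-1}$ of sums $\sum_k a_k\, j_{d/2-1}(q_{d/2,k}\,t/\beta_d)$ is only a subspace of $\mathcal{E}_{\infty}^{\bot}$ (indeed $j_{d/2-1}(r\,\cdot)\in\mathcal{E}_{\infty}^{\bot}$ for \emph{every} $r\ge 1$, not just for the discrete nodes $r=q_{d/2,k}/\beta_d$), so duality yields only the inequality $\cL^{*}(d)\le \mathcal{I}_{d/2-1}$. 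That inequality is all the theorem requires, so the argument still goes through; but the equality you assert is neither needed nor established in the paper.

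The paper avoids the limit passage in~\eqref{1-8-b} altogether: it proves the $\RR^d$-duality directly (Theorem~\ref{prop-f*}), verifies that each $j_{d/2-1}(r\,\cdot)$ with $r\ge 1$ lies in $\mathcal{E}_\infty^{\bot}$ by pairing against $\widehat{f}$ on the sphere $|\xi|=r$, and records the resulting upper bound as Corollary~\ref{cor-L*}; then $\cL^{*}(d)\le\mathcal{I}_{d/2-1}$ follows in one line. Your caveat about $d=1$ is correct: Theorem~\ref{thm-5-1} handles only $\alpha\ge -0.272$, i.e.\ $d\ge 2$, and the case $d=1$ is the classical identity~\eqref{akp}.
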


\begin{cor}\label{cor-1-1}
  For $d\ge 2$, we have
  \[
  2^{-d}\le \cL^{*}(d)\le (\sqrt{2/e})^{d\,(1+\va_d)},
  \]
  where $\sqrt{2/e}=0.857\cdots$, and $\va_d =O(d^{-2/3})$ as $d\to\infty$.
\end{cor}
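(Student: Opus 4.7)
The lower bound $2^{-d}\le \cL^*(d)$ is already part of Theorem~\ref{thm-main-as}, so the task reduces to estimating the upper bound in that theorem. Set
\[
U_d := {}_1F_2\Bigl(\tfrac{d}{2};\tfrac{d}{2}+1,\tfrac{d}{2}+1;-\tfrac{\beta_d^2}{4}\Bigr)
= \f{d\cdot 2^{d/2}\Ga(d/2+1)}{\beta_d^d}\int_0^{\beta_d} t^{d/2-1}J_{d/2}(t)\,dt,
\]
the second equality following from $j_{d/2}(t)=2^{d/2}\Ga(d/2+1)\,t^{-d/2}J_{d/2}(t)$. The plan is to bound the integral crudely, apply Stirling's formula, and insert the classical asymptotic for the first positive zero $\beta_d=q_{d/2,1}$ of $J_{d/2}$.

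For the crude estimate, invoke the pointwise inequality $|J_\nu(t)|\le 1$ for $\nu\ge 0$ and $t\ge 0$ (see Watson's treatise). Combined with $J_{d/2}\ge 0$ on $[0,\beta_d]$, this gives $\int_0^{\beta_d}t^{d/2-1}J_{d/2}(t)\,dt\le 2\beta_d^{d/2}/d$, hence
\[
U_d\le \f{2^{d/2+1}\,\Ga(d/2+1)}{\beta_d^{d/2}}.
\]
Stirling yields $\Ga(d/2+1)=\sqrt{\pi d}\,(d/(2e))^{d/2}(1+O(d^{-1}))$, while Olver's Airy-type expansion provides
\[
\beta_d = \tfrac{d}{2}+\tau\,d^{1/3}+O(d^{-1/3}),\qquad \tau = |a_1|\cdot 2^{-2/3}>0,
\]
where $a_1\approx -2.338$ is the largest (negative) zero of the Airy function. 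Hence $\beta_d^{d/2}=(d/2)^{d/2}\exp(\tau d^{1/3})(1+O(d^{-1/3}))$, and the three ingredients combine to
\[
U_d\le 2\sqrt{\pi d}\,(2/e)^{d/2}\exp(-\tau d^{1/3})(1+O(d^{-1/3})).
\]
Since $(2/e)^{d/2}=(\sqrt{2/e})^d$, solving the inequality $U_d\le(\sqrt{2/e})^{d(1+\va_d)}$ for $\va_d$, using $|\log\sqrt{2/e}|=(1-\log 2)/2>0$, yields
\[
\va_d=\f{\tau}{d^{2/3}\,|\log\sqrt{2/e}|}+O\!\Bigl(\f{\log d}{d}\Bigr)=O(d^{-2/3}),
\]
which is the advertised rate. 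Finitely many small values of $d$ are absorbed into the implicit $O$-constant.

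The only nontrivial analytic input is Olver's expansion of $\beta_d$ near the transition region of $J_{d/2}$; everything else is elementary arithmetic. The bound $|J_{d/2}|\le 1$ is extremely wasteful---$J_{d/2}(t)$ is exponentially small on $[0,d/2]$ and of order $d^{-1/3}$ in the Airy window near $d/2$---but this loss is swallowed by the large factor $2^{d/2}\Ga(d/2+1)/\beta_d^{d/2}$, so no refinement of the pointwise estimate on $J_{d/2}$ is required to reach the exponent $2/3$ in $\va_d=O(d^{-2/3})$; a sharper bound (such as Debye's uniform asymptotic) would only improve the implicit constant.
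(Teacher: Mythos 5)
Your argument is correct, and it takes a genuinely different route from the paper. The paper proves the corollary by establishing the two-sided asymptotics \eqref{6-1-0} for $a_0^\ast$, which requires the Lommel-function identity \eqref{6-2-0}, Nemes's expansion of $S_{\alpha,\alpha+1}$ with a Macdonald-function remainder bound, the Airy-type asymptotics of $J_{\alpha+1}'(q_1)$, and Stirling. You bypass all of that: writing the upper bound of Theorem~\ref{thm-main-as} as $U_d=\f{d\,2^{d/2}\Ga(d/2+1)}{\beta_d^{d}}\int_0^{\beta_d}t^{d/2-1}J_{d/2}(t)\,dt$ and using only $|J_{d/2}(t)|\le 1$ (DLMF/Watson, valid for order $\ge 0$), Stirling, and a lower bound on $\beta_d=q_{d/2,1}$, you get $U_d\le 2\sqrt{\pi d}\,(2/e)^{d/2}e^{-\tau d^{1/3}}(1+O(d^{-1/3}))$, and since $\sqrt{2/e}<1$ the conversion to $\va_d=O(d^{-2/3})$ is exactly as you solve it (your constant $\tau=|a_1|2^{-2/3}$ is the correct translation of $q_{\nu,1}=\nu+|a_1|(\nu/2)^{1/3}+O(\nu^{-1/3})$ to the variable $d$, consistent with \eqref{q1-bounds-0}). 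In fact you need even less: the elementary bound \eqref{2-9-2} gives $\beta_d>d/2$, which already yields $U_d\le 2\sqrt{\pi d}\,(2/e)^{d/2}(1+O(d^{-1}))$ and hence $\va_d=O(d^{-1}\log d)$, so the Olver/Qu--Wong refinement only improves the constant inside $\va_d$. What the crude bound cannot deliver, and what the paper's heavier machinery buys, is the asymptotic \emph{equality} $a_0^\ast=(2/e)^{\alpha(1+O(\alpha^{-2/3}))}$, i.e.\ the information that the hypergeometric upper bound of Theorem~\ref{thm-main-as} is itself of exactly this exponential size and cannot be improved by sharpening the estimate of $a_0^\ast$; since the corollary as stated claims only an upper bound on $\cL^\ast(d)$ (any $\va_d$ with $\va_d=O(d^{-2/3})$ suffices, finitely many $d$ being irrelevant to the asymptotic statement), your shorter proof fully covers it.
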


    Using Theorem \ref{thm-main-as},
    we may  obtain  the  numerical  upper estimates  of $\cL^\ast(d)$
    for $d=1,2,\ldots,10$, listed in the following table:

    \begin{table}[h]
      \begin{tabular}{|c|c|c|c|c|c|c|c|c|c|c|}
        \hline
        $d$    & 1 & 2 &3 &4  &5&6&7&8&9&10\\
        \hline
        \text{upper bounds}    &  0.589 & 0.382& 0.261& 0.184& 0.133& 0.098& 0.073 &0.055& 0.042 &0.032\\
        \hline
      \end{tabular}
    \end{table}
    Note that for $d=1$,   we  recover the  upper bound  of $\cL^\ast(1)$ previously  obtained   in
    \cite{HB93,AKP96}, while  for $d=2$,  our method with more delicate calculations leads to the following estimate:
    \[
    \mathcal{N}(\S^2; n)_{1,\infty} =\cL^{*}(2)n^{2}(1+o(1))\quad \text{with }\quad \cL^{*}(2)\in
    (0.2820,0.3822),
    \]
    which improves the corresponding known estimate in
    \cite{AZ76, Ho76}.

Let us  give a few comments on the proof of Theorem \ref{thm-main-as}.  Clearly, the lower estimate in Theorem \ref{thm-main-as}  follows directly from Theorem \ref{thm-1-2}. However,
the proof of   the upper estimate in Theorem \ref{thm-main-as} is much more involved. It  relies on   a duality argument (see, for instance,  \eqref{1-8-b}).   The crucial ingredient in the proof is to solve  an extremal problem on   $L^\infty$-approximation  by the  Bessel functions of the first kind on $\R_+=[0,\infty)$, which seems to be of independent interest.

To be more precise, we need to introduce several notations.  For $\al\in\CC$, let $J_\al$ denote the Bessel function of the first kind, and $j_\al$   the normalized Bessel function given by
\[
j_{\alpha}(z):=2^{\alpha}\Gamma(\alpha+1)\,\frac{J_{\alpha}(z)}{z^{\alpha}},\  \ z\in\C.
\]
Let $\{q_{\a, k}\}_{k=1}^\infty$ denote the strictly  increasing  sequence of all positive zeros of  $j_\a(z)$.
For $\al>-\f12$, we denote by $X_\al$ the set of all functions $F\in L^\infty [0,\infty)$ that  can be represented as
an   infinite sum of the  form
\[
F(t) :=\sum_{k=1}^\infty  a_k j_{\al} \bl( r_{\al+1,k}t\br),\quad t\ge 0,\quad a_k\in\RR,\quad r_{\al+1,k} =\f {q_{\al+1,k}}{q_{\al+1,1}},\quad k\in\NN.
\]
Here we assume that the series converges absolutely to $F$ almost everywhere on $\RR_+$.
In the proof of the upper estimate in  Theorem  \ref{thm-main-as} , we are required to solve the following extremal problem for $\al=\f d2-1$:
\begin{equation}\label{dual1}
\mathcal{I}_\al:=  \inf_{F\in X_\al}    \|j_{\al+1}-F\|_{L^\infty(\R_+)}.
\end{equation}
In this paper, we find the  exact value of $\mathcal{I}_\al$  for $\al\ge -0.272$,  from which the upper estimate in Theorem  \ref{thm-main-as} will follow:

\begin{thm}\label{thm-5-1}
  Let $\al\ge -0.272$ and let $\mathcal{I}_\al$ be defined in \eqref{dual1}. Then
  \begin{equation}
  \mathcal{I}_\al= {}_{1}F_{2}\Bigl(\alpha+1;\alpha+2,\alpha+2;-\frac{q_{\al+1,1}^{2}}{4}\Bigr)  =  \frac{\int_{0}^{q_{\al+1,1}}j_{\alpha+1}(t)t^{2\alpha+1}\,dt}{\int_{0}^{q_{\al+1,1}}t^{2\alpha+1}\,dt}. \label{5-2-0}
  \end{equation}
\end{thm}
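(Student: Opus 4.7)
The strategy is to establish matching upper and lower bounds for $\mathcal{I}_\alpha$, whose common value will be
\[
c_\alpha:=\frac{\int_0^{\beta}j_{\alpha+1}(t)\,t^{2\alpha+1}\,dt}{\int_0^{\beta}t^{2\alpha+1}\,dt},\qquad \beta:=q_{\alpha+1,1}.
\]
The equality $c_\alpha={}_1F_2(\alpha+1;\alpha+2,\alpha+2;-\beta^2/4)$ follows from the power series of $j_{\alpha+1}$ and termwise integration, using the elementary identity $(\alpha+1)/(k+\alpha+1)=(\alpha+1)_k/(\alpha+2)_k$.

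For the lower bound I would exploit the positive linear functional
\[
L(f):=\int_0^{\beta} f(t)\,t^{2\alpha+1}\,dt.
\]
The key identity is $L(j_\alpha(r_{\alpha+1,k}\,\cdot\,))=0$ for every $k\ge 1$: after the substitution $u=r_{\alpha+1,k}t$ this reduces to
\[
\int_0^{q_{\alpha+1,k}} j_\alpha(u)\,u^{2\alpha+1}\,du=2^\alpha\Gamma(\alpha+1)\,q_{\alpha+1,k}^{\alpha+1}\,J_{\alpha+1}(q_{\alpha+1,k})=0,
\]
via the classical antiderivative $(z^{\alpha+1}J_{\alpha+1}(z))'=z^{\alpha+1}J_\alpha(z)$ and the definition of $q_{\alpha+1,k}$. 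For any $F\in X_\alpha$, absolute a.e.\ convergence of the defining series together with dominated convergence on $[0,\beta]$ yield $L(F)=0$, so that $L(j_{\alpha+1}-F)=c_\alpha\,L(1)$. The trivial estimate $|L(g)|\le L(1)\,\|g\|_{L^\infty[0,\beta]}$ then gives $c_\alpha\le \|j_{\alpha+1}-F\|_{L^\infty(\mathbb{R}_+)}$, whence $\mathcal{I}_\alpha\ge c_\alpha$.

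For the upper bound I would construct an explicit $F^\ast\in X_\alpha$ realising equality. The derivative identity $j_\alpha'(z)=-\tfrac{z}{2(\alpha+1)}\,j_{\alpha+1}(z)$ together with $j_{\alpha+1}(q_{\alpha+1,k})=0$ shows that each $\phi_k(t):=j_\alpha(r_{\alpha+1,k}t)$ satisfies the Neumann condition $\phi_k'(\beta)=0$. Classical Sturm--Liouville theory on $(0,\beta)$ with weight $t^{2\alpha+1}$ (well-posed since $\alpha>-1$) then identifies $\{1\}\cup\{\phi_k\}_{k\ge 1}$ as a complete orthogonal system in $L^2((0,\beta),\,t^{2\alpha+1}\,dt)$, with $\lambda=0$ contributing the constant eigenfunction. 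Expanding $j_{\alpha+1}$ in this basis, the constant Fourier coefficient is $c_\alpha$ by the computation above, so
\[
j_{\alpha+1}(t)=c_\alpha+\sum_{k=1}^\infty a_k\,\phi_k(t),\qquad t\in[0,\beta],
\]
with $a_k$ given explicitly by the Dini orthogonality relation in terms of $J_\alpha(q_{\alpha+1,k})$. I would define $F^\ast(t):=\sum_{k\ge 1}a_k\,\phi_k(t)$ on all of $\mathbb{R}_+$, so that $j_{\alpha+1}(t)-F^\ast(t)=c_\alpha$ on $[0,\beta]$ by construction.

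The main obstacle is then the global pointwise estimate
\[
|j_{\alpha+1}(t)-F^\ast(t)|\le c_\alpha\qquad \text{for all }t>\beta,
\]
together with verifying that $F^\ast\in X_\alpha$ in the first place, i.e.\ absolute a.e.\ convergence of the series on all of $\mathbb{R}_+$ rather than merely on $[0,\beta]$. Convergence is handled using the Bessel-zero asymptotics $q_{\alpha+1,k}\sim\pi k$, the oscillatory decay $|j_\alpha(s)|\lesssim s^{-\alpha-1/2}$, and matching bounds $|a_k|=O(k^{-\alpha-3/2})$ derived from integration-by-parts in the Dini integral representation of $a_k$. For the pointwise bound I would split the series into a finite partial sum—studied via the interlacing structure of the zeros of $j_\alpha$ and $j_{\alpha+1}$ and sharp pointwise estimates of $\phi_k$ between consecutive zeros of $j_{\alpha+1}$—and a uniformly small tail. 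The numerical threshold $\alpha\ge -0.272$ enters precisely here: it is the least value of $\alpha$ for which the coefficient decay dominates the amplitude of the oscillations of $F^\ast$ on $(\beta,\infty)$ so that the required global bound of $c_\alpha$ survives. Pushing this delicate estimate through constitutes the bulk of the technical work.
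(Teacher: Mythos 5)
Your treatment of the ${}_1F_2$ identity, the lower bound via the positive functional $L(f)=\int_0^\beta f(t)t^{2\alpha+1}\,dt$ (using $L(j_\alpha(r_{\alpha+1,k}\cdot))=0$ and term-by-term integration), and the construction of the candidate extremizer $F^\ast$ as the non-constant part of the Dini-type expansion of $j_{\alpha+1}$ in the orthogonal system $\{1\}\cup\{j_\alpha(r_{\alpha+1,k}\cdot)\}$ on $[0,\beta]$ all match the paper's argument (your lower bound is in fact a slightly more direct version of the paper's contradiction argument, resting on the same orthogonality). The genuine gap is exactly the step you defer as ``the bulk of the technical work'': the global bound $|j_{\alpha+1}(t)-F^\ast(t)|\le c_\alpha$ for $t\ge\beta$. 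This is the heart of the theorem, and the strategy you sketch for it — a finite partial sum plus a ``uniformly small tail,'' with the threshold $\alpha\ge-0.272$ attributed to coefficient decay dominating the oscillation amplitude — points in a direction that cannot work as stated. The tail is not small: since $j_{\alpha+1}(\beta)=0$, the expansion at $t=\beta$ forces $c_\alpha=\sum_{k\ge1}|a_k\,j_\alpha(q_{\alpha+1,k})|$, so bounding the terms $k\ge2$ by their suprema (via $\max_{t\ge q_{\alpha+1,k}}|j_\alpha(t)|=|j_\alpha(q_{\alpha+1,k})|$) already uses up $c_\alpha-|a_1 j_\alpha(q_{\alpha+1,1})|$ of the available budget; what saves the estimate is the alternating sign pattern $(-1)^{k+1}a_k>0$ and this exact compensation, not decay.

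Concretely, the missing ingredients are: (i) the signs of the Dini coefficients, $(-1)^{k+1}a_k>0$, obtained from the closed form $a_k=-\frac{2}{j_\alpha(q_{\alpha+1,k})}\int_0^1\frac{j_{\alpha+1}(q_{\alpha+1,1}x)x^{2\alpha+3}}{r_{\alpha+1,k}^2-x^2}\,dx$; (ii) the resulting reduction of the desired bound to the single two-term estimate $\sup_{t\ge q_{\alpha+1,1}}\bigl|j_\alpha(t)-u\,j_{\alpha+1}(t)\bigr|=|j_\alpha(q_{\alpha+1,1})|$ for $u=1/a_1$, together with the inequality $1/a_1\le\frac{\alpha+2}{\alpha+1}$ (proved by showing $x\mapsto j_{\alpha+1}(q_{\alpha+1,1}x)/(1-x^2)$ is decreasing via the product formula for $j_{\alpha+1}$); and (iii) the proof of that two-term estimate for all $0\le u\le\frac{\alpha+2}{\alpha+1}$, which is where the constant $-0.272$ actually enters: one sets $f=j_\alpha-\frac{\alpha+2}{\alpha+1}j_{\alpha+1}$, notes $f'(t)=\frac{t^3 j_{\alpha+3}(t)}{8(\alpha+1)(\alpha+2)(\alpha+3)}$, derives a second-order ODE for $f$ and a decreasing Lyapunov-type quantity $f^2+\frac{A_2}{A_0}f'^2$ showing the local maxima of $|f|$ decrease, and finally compares $|f(q_{\alpha+3,1})|$ with $|f(q_{\alpha+1,1})|$ — numerically for $-0.272\le\alpha\le0.575$ and via zero bounds for larger $\alpha$. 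None of this is recoverable from the interlacing-plus-small-tail outline you give, so as it stands the upper bound, and hence the theorem, is not proved.
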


The identity  \eqref{5-2-0} extends the following   result 
(see \cite{AKP96,Go05}):
\begin{equation}\label{akp}
  \inf_{a_k\in\RR} \Bigl\|\frac{\sin t}{t}-\sum_{k=1}^{\infty}a_{k}\cos
    kt\Bigr\|_{L^\infty(\R_+)}=\frac1\pi\int_0^\pi \frac{\sin x}x\,dx.
\end{equation}
    We point out that the proof of this last formula in \cite{Go05}  relies on the fact that the corresponding extremal function is a periodic function, which does not seem to work in our situation. Our proof of \eqref{5-2-0} in this paper is different from that in \cite{Go05}.

This paper is organized as follows. Section 2 contains some background  information and historical comments on sharp Nikolskii constants.  Some
preliminary materials on spherical harmonics and Bessel functions are given in Section~\ref{sec:2}. In
Section~\ref{sec-dual},  we deduce more explicit duality formulas for the
Nikolskii constants, and connect our problem with several other extremal
problems in approximation theory.  We also  study  the existence, uniqueness
and characterizations of the  corresponding extremal functions for these
extremal problems in Section~\ref{sec-dual}.
After that, in Section~\ref{sec:5}, we prove the  main theorem,
Theorem~\ref{thm-5-1}, from which  the upper estimates in
Theorem~\ref{thm-main-as} will follow. The proof of
Corollary~\ref{cor-1-1} is given in Section~\ref{sec:6}.   Finally, in
Section~\ref{sec-remez}, we show how our results on the Nikolskii constants can be used to deduce certain interesting  Remez-type results.

\section{Historical background}\label{sec-back}

{In this  section, we give some background information and historical
comments  on the Nikolskii constants. Nikolskii inequalities have been playing
crucial roles in  approximation theory and harmonic analysis, particularly in
the embedding theory of function spaces (see \cite{Ni75, DT05}).
%
%
%

In the case of $d=1$,  the problem of finding  the  exact values of the
constants $\mathcal{N}(\mathbb{S}^1; n)_{1,\infty}$   has a very long history,
starting with the work of  Jackson  \cite{Ja33} in 1933. A  closed form of  the
constant $\mathcal{N}(\S^1; n)_{1,\infty}$,  which  is not very useful in
applications,  was found by Geronimus \cite{Ge38}. Stechkin (see \cite{Ta65,
Ta93}) {proved that there is a constant~$c$ 
such that $\mathcal{N} (\S^1; n)_{1,\infty} = cn +o(n)$ as $n\to \infty$, while
Taikov \cite{Ta65}) further proved  that $\mathcal{N} (\S^1; n)_{1,\infty} = c
n +O(1)$ with  $c\in (0.539,0.584)$. In \cite{Go05,GM18} it was established
that $c=\mathcal{L}^\ast(1)$ and for any $n$ and $0<p<\infty$
\[
(2n)^{1/p}\cL^\ast_{p,\infty}(1)\le \mathcal{N}(\S^1; n)_{p,\infty}\le
(2n+2\lceil p^{-1}\rceil)^{1/p}\cL^\ast_{p,\infty}(1)
\]
(see also \cite{LL15a,GT17}).} 
In the limiting case of  $p=0$,
Arestov \cite{Ar80} found the exact values of the Nikolskii constants  for  the
trigonometric polynomials  on the unit circle $\mathbb{S}^1$. Finally, in the
case of $d\ge 2$ and  $0<p<q=\infty$,    Arestov and Deikalova \cite{AD14}
   proved  that the supremum in \eqref{1-5} can be achieved by   zonal
polynomials, and as a result, the Nikolskii constant  $\mathcal{N}(\S^2; n)_{1,\infty}$  for spherical polynomials coincides with the Nikolskii constant
for algebraic polynomials in $L^{1}([-1,1])$ \cite{AZ76, Ho76}.

{As  was mentioned in the introduction,  of  crucial importance  in the
proofs of the main results in this paper are  the  duality formulas for the Nikolskii constants, which will be given in the next section.  In the case of
$\SS^{1}$ this approach was introduced by Taikov \cite{Ta65}, who  established the
classical Bernstein  result on  the best approximation of $\cos nx$ by
functions $\sum_{k=n+1}^{\infty}a_{k}\cos kx\in L^{\infty}[0,2\pi)$. L.~H\"ormander and B.~Bernhardsson \cite{HB93}  proved that
\begin{equation}\label{hb}
\cL^\ast (1)=\inf_{v}\,\Bigl\|\frac{\sin
x}{x}-v(x)\Bigr\|_{L^{\infty}(\mathbb{R})},\quad
\widehat{v}=0\quad \text{in}\quad (-1,1),
\end{equation}
 and described general properties of
the extremal function $G\in \mathcal{E}_{1}^{1}$ satisfying $\cL^\ast
(1)=\frac{\|G\|_{L^{\infty}(\mathbb{R})}}{\|G\|_{L^{1}(\mathbb{R})}}$.  Furthermore, they also computed the following very precise
numerical value:   $\cL^\ast (1)\approx 0.54092882$ (cf. with
\cite{Go05,GM18}).

In the particular case when $v$ has the form $\sum_{k=1}^{\infty}a_{k}\cos kt$
the problem \eqref{hb} was considered by Andreev, Konyagin, and Popov \cite{AKP96}
(see \eqref{akp}), who studied a constant that is equivalent to $\cL^\ast (1)$
via the Fourier transform, that is (see also \cite{Go05})
\begin{equation}\label{akp-}
\cL^\ast (1)=\sup_{F\ne 0}\frac{|F(0)|}{\|F\|_{L^{1}(\mathbb{R})}},\quad
\widehat{F}=0\quad \text{in}\quad [-1,1]^{c}.
\end{equation}

Some interesting applications of the Nikolskii constants  in  number theory can be found in the paper by Carneiro, Milinovich, and Soundararajan \cite{CMS18}, who considered
a family of problems related to the Nikolskii constant \eqref{akp-} and applied the resulting
estimates  to study the problem on the  distribution of prime numbers.
The paper \cite{CMS18} also  considers  a version of the Nikolskii problem when $\widehat{F}\le
0$ outside $[-1,1]$. This problem for $F\ge 0$ corresponds to  the extremal
Cohn-Elkies problem (also called the Delsarte problem) that is connected with the problem of sphere
packing (see, e.g., \cite{Go00a,CE03,CKMRV17,Vi17}).

We also refer to \cite{Bo54,Ni75,NW78,BKP12,AD14,GT17,ABDH18} for more background
information on classical Nikolskii constants. }

}

\section{Preliminaries}\label{sec:2}

In this section, we present  some preliminary materials on spherical harmonics and Bessel functions,  most of which  can be found in  \cite{DX13}, \cite[Chap.~7]{BE53},
\cite{OLBC10}, and \cite{Wa66}.

First, a spherical harmonic of degree $n$ on $\sph$ is the restriction to $\sph$ of a  homogeneous harmonic polynomial in $d+1$ variables of total degree $n$. We denote by
 $\HH_n^d$  the space of all
spherical harmonics of degree $n$ on $\SS^{d}$.
As is well known, the spaces $\HH_n^d$, $n=0,1,\cdots$,  are mutually orthogonal with respect to the
inner product of $L^2(\SS^{d})$, and for each non-negative integer $n$, the function $\f {k+\ld} {\ld }\,C_k^\ld(x\cdot y)$, $x, y\in\sph$ is the reproducing kernel of the space $\HH_n^d$, where $ \ld=\f{d-1}2$ and $C_n^\lambda$ denotes  the usual  Gegenbauer polynomial of degree $n$,  as defined in \cite{Sz67}. Thus,
\[
f(x) =\frac{k+\lambda}{\lambda}\int_{\SS^{d}} f(y) C_k^\lambda (x\Cdot
y)\,d\sigma(y),\quad x\in\SS^{d},\quad f\in\HH_k^d.
\]
As a result,  the reproducing kernel of the space $\Pi_n^d$ of spherical polynomials of degree at most $n$ on $\sph$ is given by
\begin{equation}\label{reproduce}
G_{n}(x\cdot y):=\sum_{k=0}^n \f{k+\ld}\ld\,C_k^{\ld}(x\cdot y)= (\dim \Pi_n^d)R_n^{(\frac{d}{2},\frac{d-2}{2})}(x\cdot y),
\end{equation}
where $R_n^{(\al,\be)}$ denotes the  normalized Jacobi polynomial of degree $n$:  $$R_{n}^{(\alpha,\beta)}(t)=\frac{P_{n}^{(\alpha,\beta)}(t)}{P_{n}^{(\alpha,\beta)}(1)}.$$

Second,
an entire function $f$ of $d$-complex variables is of spherical exponential type at most $\sa$ if for every $\va>0$ there exists a constant $A_\va>0$ such that
$|f(z)|\leq A_\va e^{(\sa+\va)|z|}$
for all $z=(z_1, \cdots, z_d)\in\CC^d$ (see \cite[Chap.~3]{Ni75}). Given $0<p\leq \infty$,  denote by  $\mathcal{E}_{p}^{d}$   the class of all entire functions
of  spherical exponential type at most  in $d$-variables whose restrictions to $\RR^d$  belong to  the space $L^p(\RR^d)$.
If  $0<p<q\leq \infty$, then  $\mathcal{E}_p^d\subset \mathcal{E}_q^d$
and there exists a constant $C=C_{d,p,q}$ such that $\|f\|_q \leq C \|f\|_p$ for all $f\in\mathcal{E}_p^d$.
Moreover,
every function $f\in \mathcal{E}_{p}^{d}$ is bounded on
$\mathbb{R}^{d}$ and satisfies  $|f(z)|\leq \|f\|_{L^{\infty}(\mathbb{R}^{d})}e^{\sa |\!\Im
(z)|}$, $\forall\,z\in\CC^d$.
According to the Palay-Wiener theorem,   each  function  $f\in \mathcal{E}_{p}^{d}$ can be identified with a function  in $ L^p(\RR^d)$ whose
distributional Fourier transform is  supported in the unit ball $\mathbb{B}^{d}:=\{x\in \RR^{d}\colon
|x|\le 1\}$.  Here we recall that
the  Fourier transform of $f\in L^1(\RR^d)$ is defined by
\[
\mathcal{F} _df(\xi)\equiv \wh{f}(\xi)=\int_{\RR^d} f(x) e^{- i x\Cdot \xi}\, dx,\quad \xi\in\RR^d,
\]
while   the inverse Fourier transform is given by
\[
\mathcal{F}_d^{-1} f(x)=\frac 1{(2\pi)^d}\int_{\RR^d} f(\xi) e^{ix\Cdot \xi}\, d\xi,\quad f
\in L^1(\RR^d),\quad x\in\RR^d.
\]

Finally, we present  some well-known properties of  the Bessel functions,  most of which  can be found in   \cite[Chap.~7]{BE53},
 and \cite{Wa66}.   The Bessel function $J_\al$ of the first kind is the solution to the differential equation
\begin{equation}\label{ODE}
x^2 y'' +x y'+ (x^2-\al^2)y=0
\end{equation}
such that the limit  $\lim_{x\to 0} x^{-\al} J_\al(x)$ exists and is finite.
Denote by $j_\al$  the normalized Bessel function given by
$$
j_{\alpha}(z):=2^{\alpha}\Gamma(\alpha+1)\,\frac{J_{\alpha}(z)}{z^{\alpha}},\  \ z\in\C.$$
As is well known,
$j_\al(z)$ is an even entire function of exponential type $1$  satisfying that  $j_\al(0)=1$ and
  \begin{align}
(x^{2\alpha+2}j_{\alpha+1}(x))'&=(2\alpha+2)x^{2\alpha+1}j_{\alpha}(x),\quad j_{\alpha}'(x) =-\frac{xj_{\alpha+1}(x)}{2\alpha+2}.\label{bess-deriv}
\end{align}
Moreover,
  \begin{align}
  |j_\al(x)|\leq C (1+|x|)^{-\al-\f12},\quad x\in\RR.\  \label{2-4}
  \end{align}
Note that
\eqref{bess-deriv}  also implies (see  \cite[7.2.8 (56)]{BE53})
\begin{equation}\label{2-6-00}
-\f{z j_{\al+2}(z)}{2(\al+2)}
=j_{\alpha+1}^{\,\prime}(z)=
\frac{2(\alpha+1)}{z}\left(j_{\alpha}(z)-j_{\alpha+1}(z)\right). \end{equation}

  If $\al=\f d2-1$, then the function $j_{\al}(|\Cdot|)$ is the Fourier transform of the normalized  surface Lebesgue measure on the sphere  $\S^{d-1}$, while  if $\al=\frac d2$, then the function $\f {V_d} {(2\pi)^d}\,j_{\al}(|\Cdot|)$ is the Fourier transform of the characteristic function $\chi_{\B^d}$ of the unit ball $\B^d$. That is,
\[
j_{\frac d2-1}(|\xi|)=\int_{\S^{d-1}}e^{-ix\Cdot
  \xi}\,d\sa(x)=\wh{\sa_{d-1}}(\xi),\quad \xi\in\RR^d,
\]
and
\begin{equation}\label{2-7}
\f {V_d} {(2\pi)^d}\,\mathcal{F}_d\Bl( j_{\f d2} (|\Cdot|)\Br)(\xi)=\chi_{\B^d}(\xi),\quad \xi\in\RR^d,
\end{equation}
where the Fourier transform
$\mathcal{F}_d$ is understood in a distributional sense or in the space of $L^2(\R^d)$.

The zeros of $j_\al (z)$ are all simple and real. Let $\{q_{\a, k}\}_{k=1}^\infty$ denote the sequence of all positive zeros of  $j_\a(z)$ arranged so that  $0<q_{\alpha,1}<q_{\alpha,2}<\dots$. For convenience, we also set $q_{\al,0}=0$.
Then
  $q_{\alpha,k}\sim \pi k$ as
$k\to \infty$, and for   $\alpha>0$, the smallest positive zero of $j_\al(z)$ satisfies
\begin{equation}\label{2-9-2}
\sqrt{\alpha(\alpha+2)}<q_{\alpha,1}<\sqrt{\alpha+1}\,(\sqrt{\alpha+2}+1).
\end{equation}
Moreover, \begin{equation}\label{2-10-19}
j_{\alpha}(z)=\prod_{k=1}^{\infty}
\Bigl(1-\frac{z^{2}}{q_{\alpha,k}^{2}}\Bigr),\quad z\in\CC.
\end{equation}

The  positive zeros of   $j_{\alpha}(z)$ and $j_{\alpha+1}(z)$ are interplaced:
\begin{equation}\label{interorder}
0<q_{\alpha,1}<q_{\alpha+1,1}<q_{\alpha,2}<q_{\alpha+1,2}<\dots.
\end{equation}

The following result  on the zeros of the Bessel functions will be used repeatedly in later sections:

\begin{lem}
For $
\alpha>-1/2$ and $k=1,2,\cdots,$ we have
\begin{equation}\label{8-10-0}
\max_{z\ge q_{\al+1,k}} |j_{\alpha}(z)|= |j_{\alpha}(q_{\alpha+1,k})|=(-1)^kj_{\alpha}(q_{\alpha+1,k})>0.
\end{equation}
\end{lem}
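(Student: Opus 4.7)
The plan is to reduce everything to the monotonicity of one energy-type quantity along the Bessel ODE. The first observation is that, by \eqref{bess-deriv}, the critical points of $j_\alpha$ on $(0,\infty)$ are exactly the zeros $\{q_{\alpha+1,k}\}_{k\ge 1}$ of $j_{\alpha+1}$, so $j_\alpha$ is monotone on each closed interval $[q_{\alpha+1,k},q_{\alpha+1,k+1}]$. In particular $|j_\alpha|$ attains its maximum over such an interval at one of its two endpoints, and the whole statement reduces to showing that the sequence $|j_\alpha(q_{\alpha+1,k})|$ is strictly decreasing in $k$: combined with $|j_\alpha(x)|\to 0$ as $x\to\infty$ (which follows from \eqref{2-4} because $\alpha>-1/2$), this gives $\max_{z\ge q_{\alpha+1,k}}|j_\alpha(z)|=|j_\alpha(q_{\alpha+1,k})|$.

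To prove that monotonicity, I would introduce the energy
$$K(x) := j_\alpha(x)^2 + j_\alpha'(x)^2,\qquad x>0.$$
A short computation using \eqref{bess-deriv} and \eqref{2-6-00} (or equivalently the Bessel equation \eqref{ODE}) shows that $j_\alpha$ satisfies
$$j_\alpha''(x) + \frac{2\alpha+1}{x}\,j_\alpha'(x) + j_\alpha(x) = 0,$$
hence $j_\alpha(x)+j_\alpha''(x)=-\frac{2\alpha+1}{x}\,j_\alpha'(x)$ and
$$K'(x) = 2j_\alpha'(x)\bigl(j_\alpha(x)+j_\alpha''(x)\bigr) = -\frac{2(2\alpha+1)}{x}\,j_\alpha'(x)^2.$$
Since $\alpha>-1/2$, $K'\le 0$ on $(0,\infty)$, with strict inequality off the isolated set $\{q_{\alpha+1,k}\}$, so $K$ is strictly decreasing. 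At any critical point $q_{\alpha+1,k}$ one has $j_\alpha'(q_{\alpha+1,k})=0$ and therefore $K(q_{\alpha+1,k})=j_\alpha(q_{\alpha+1,k})^2$, so the strict monotonicity of $K$ yields $j_\alpha(q_{\alpha+1,k})^2>j_\alpha(q_{\alpha+1,k+1})^2$ for every $k$, which is exactly what is needed.

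Finally, the sign of $j_\alpha(q_{\alpha+1,k})$ will come from the interlacing \eqref{interorder}: since $j_\alpha(0)=1>0$ and the positive zeros of $j_\alpha$ are the $q_{\alpha,j}$, $j_\alpha$ alternates sign with $\sign j_\alpha=(-1)^j$ on $(q_{\alpha,j},q_{\alpha,j+1})$, and \eqref{interorder} places $q_{\alpha+1,k}\in(q_{\alpha,k},q_{\alpha,k+1})$, giving $(-1)^k j_\alpha(q_{\alpha+1,k})>0$. The only creative step in the whole argument is guessing the right Lyapunov quantity $K$; after that everything reduces to a one-line ODE manipulation, so I do not expect a genuine obstacle.
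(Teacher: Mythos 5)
Your proof is correct and follows essentially the same route as the paper: both arguments rest on the Lyapunov quantity $j_\alpha^2+(j_\alpha')^2$, whose derivative $-\frac{2(2\alpha+1)}{x}(j_\alpha')^2$ makes it strictly decreasing for $\alpha>-1/2$, evaluated at the critical points $q_{\alpha+1,k}$ of $j_\alpha$, with the sign statement coming from the interlacing \eqref{interorder}. (Your phrasing that the sequence $|j_\alpha(q_{\alpha+1,k})|$ is strictly \emph{decreasing} is the correct one; the paper's "strictly increasing" at that point is a slip.)
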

\begin{proof}
  For the sake of  completeness, we include a short  proof of this lemma here.
  Since $$\bl( j_\al(z)^2\br)'=2j_\al(z) j_\al'(z)=-\f { z j_{\al+1} (z) j_\al(z)}{\al+1},$$
   the function $(j_{\alpha}(z))^2$ achieves its local maxima on $(0,\infty)$ at the positive zeros of $j_{\al+1}(z)$,  on which  we also have
  $(j_{\alpha}(z))^2=(j_{\alpha}(z))^2+(j_{\alpha}^{\,\prime}(z))^2$.
  However, it is easily seen from  \eqref{ODE}  that
  \begin{equation*}
  \Bl((j_{\alpha}(z))^2+(j_{\alpha}^{\,\prime}(z))^2
  \Br)'=
  -\frac{2(2\alpha+1)}{z}\,
  (j_{\alpha}^{\,\prime}(z))^2,
  \end{equation*}
  which implies that the function  $(j_{\alpha}(z))^2+(j_{\alpha}^{\,\prime}(z))^2$
  is strictly  decreasing on $(0,\infty)$ if  $\alpha>-1/2$.
  Thus, the sequence
  $$\Bl\{\Bl(j_\al(q_{\al+1,k})\Br)^2\Br\}_{k=1}^\infty=
  \Bl\{\Bl(j_\al(q_{\al+1,k})\Br)^2+ \Bl\{\Bl(j_\al'(q_{\al+1,k})\Br)^2\Br\}_{k=1}^\infty$$
  is strictly increasing.  It then follows that
  \[
  \max_{z\ge q_{\al+1,k}} |j_{\alpha}(z)|= |j_{\alpha}(q_{\alpha+1,k})|.
  \]
  Finally, the second equality in \eqref{8-10-0} is a direct consequence of \eqref{interorder}.
\end{proof}

For $\al>-\f12$,
the  Fourier--Bessel expansion of a function $f\in L^1 ([0,1], t^{2\al+1}\, dt)$  with respect to the orthogonal basis $\{ j_\al ( q_{\al+1,k} x)\}_{k=0}^\infty$ is given by
\begin{equation}\label{bessj-sum}
f(t)=\sum_{k=0}^{\infty}h_k^{-1}c_{k}(f)j_{\alpha}(q_{k}t),\quad t\in [0,1],
\end{equation}
where
\begin{align*}
h_{0}&=\int_0^1 t^{2\al+1}\, dt=\frac{1}{2\alpha+2},\quad h_{k}=\int_{0}^{1}j_{\alpha}^{2}(q_{k}t)t^{2\al+1}\,dt=
\frac{j_{\alpha}^{2}(q_{k})}{2},\quad k=1,2,\cdots, \\
c_{k}(f)&=\int_{0}^{1}f(t)j_{\alpha}(q_{k}t)t^{2\al+1}\,dt,\quad k=0,1,\cdots.
\end{align*}  If  $f\in C^{1}([0,1])$, then the series \eqref{bessj-sum} converges absolutely outside of a neighbourhood of the origin.

For later applications, we also  record here the following two useful  formulas on  Bessel functions ( see \cite[Sect.~6.2.10]{Lu69}):

\begin{align}
\int_{0}^{1}j_{\alpha}(at)j_{\alpha}(bt)t^{2\al+1}\,dt&=
\frac{a^{2}j_{\alpha+1}(a)j_{\alpha}(b)-
b^{2}j_{\alpha}(a)j_{\alpha+1}(b)}{2(\alpha+1)(a^{2}-b^{2})},\quad a>b>0,\label{2-13-0}\\
\int_{0}^{z}t^{2\alpha+1}j_{\alpha+1}(t)\,dt&=
\frac{z^{2\alpha+2}}{2\alpha+2}\,
{}_{1}F_{2}\Bigl(\alpha+1;\alpha+2,\alpha+2;-\frac{z^{2}}{4}\Bigr),\quad z>0. \label{2-14-0}
\end{align}

\section{Duality  formulas  and characterizations of certain  extremal functions  }\label{sec-dual}

 The main goals  in this section are  to prove  some duality     formulas for the Nikolskii constants $\mathcal{N}(\sph; n)_{p,\infty}$ and $ \mathcal{N}(\RR^d)_{p,\infty}$, and to characterize the corresponding extremal functions in the dual spaces.
 These results   will play an important role in the proofs of  our main theorems in the next section.
 For simplicity, we shall write $\mathcal{N}(\sph; n)_p=\mathcal{N}(\sph; n)_{p,\infty}$ and $\mathcal{N}(\RR^d)_p=\mathcal{N}(\RR^d)_{p,\infty}.$

We start with some necessary notations. Let $w_d(t)=c_d (1-t^{2})^{d/2-1}$, where $c_d>0$ is a normalization constant such that $\int_{-1}^1 w_d(t)\, dt =1$. For $1\leq p\leq \infty$,  we denote by   $L^p([-1,1]; w_d)\equiv L^p(w_d)$   the usual  Lebesgue $L^p$-space defined with respect to the measure $w_d(t)\, dt $  on $[-1,1]$, and $\|\cdot\|_{L^p(w_d)}$ the Lebesgue $L^p$-norm of the space $L^p(w_d)$.
  Denote by $\mathcal{P}_n$ the space of all univariate algebraic polynomials of degree at most $n$.
  Define
    \[
  \mathcal{P}_n^{\bot} = \Bigl\{ F\in L^{1}(w_{d})\colon\ \ \int_{-1} ^1 F(t) t^j w_d(t)\, dt =0,\quad j=0,1,\cdots,n\Bigr\},
  \]
  and $  \mathcal{P}_{n,p}^{\bot} =  \mathcal{P}_n^{\bot}  \cap L^{p}(w_d)$ for $1\leq p\leq \infty$.
  Finally, given a normed linear space $(X,\|\cdot\|)$, the distance of  a vector $x\in X$ from a set $E\subset X$ is defined by
  $$\dist(x, E)_X:=\inf_{y\in E}\|x-y\|.$$
  As is well known, if $E$ is a linear subspace of $X$, then one has (see, for instance,   \cite[p.~61,  Theorem~1.3]{DL}),
 \begin{equation}\label{3-1-19}
 \dist(x, E)_X=\max_{\sub{\ell\in E^{\bot}\\
     \|\ell\|_{X^\ast} \leq 1}}  |\la \ell, x\ra|,
 \end{equation}
where $$E^{\bot}:=\Bl\{ \ell \in X^\ast,\quad \la \ell, y\ra =0,\quad \forall\,y\in E\Br\}$$ and $X^\ast$ denotes the dual of $X$.

Next, recall  that $j_\al(z) = \Ga(\al+1) (t/2)^{-\al} J_\al(t)$ denotes the normalized Bessel function of the first kind.
Let  $K(|x|):=\f {V_d} {(2\pi)^d}\,j_{d/2}(|x|)$.
For convenience, we will  use a slight abuse of the notation that  $f(x)=f(|x|)$ for a radial function on $\R^d$.
By \eqref{2-7}, $\mathcal{F}_d K(\xi)=\wh{K}(\xi)=\chi_{\B^d}(\xi)$ for every $\xi\in\RR^d$, and by \eqref{2-4},  $K(|\Cdot|)\in L^{q}(\R^{d})$ for
$q>\frac{2d}{d+1}$.
It then follows that for each
$1\le p<\frac{2d}{d-1}$,
\begin{equation}\label{f-K}
f(x)=\int_{\R^d} f(y)K(|x-y|)\, dy,\quad x\in\R^d,\quad f\in\cE_p^d.
\end{equation}
In particular, for a radial function $f(|\Cdot|) \in\cE_p^d$ with $1\le p<\frac{2d}{d-1}$,
\[
f(0)=\int_{0}^{\infty}K(t)f(t)\,v_{d}(t)\,dt,
\]
where $v_{d}(t):=\omega_{d-1}t^{d-1}$.
Let  $L^p(v_d)$ denote the Lebesgue $L^p$-space defined with respect to the measure $ v_d(t) dt$ on $[0,\infty)$. Clearly,
for each $f\in L^p(v_d)$, $\|f(|\Cdot|)\|_{L^p(\RR^d)} =\|f\|_{L^p(v_d)}$.


Our duality results for the Nikolskii constants on the sphere can be stated as follows:

\begin{thm}\label{thm-dual-S} If $1\le p<\infty$  and $\f 1p+\f 1{p'}=1$, then  for every positive integer $n$,
  \begin{equation}\label{3-3-1}
  \mathcal{N}(\sph; n)_p=\dist(G_n, \mathcal{P}_{n,p'}^{\bot})_{L^{p'}(w_d)},
  \end{equation}
  where $G_n$ is   the reproducing kernel of the space $\Pi_n^d$  given in  \eqref{reproduce}.
Moreover,    there exists  a minimizer $F_\ast\in \mathcal{P}_{n,p'}^{\bot}$ of  the form
\begin{equation}\label{3-4-0}
F_\ast=G_n-  \f { P^{*}(1)|P^{*}|^{p-1}\sign P^{*}}{\|P^\ast\|_{L^p(w_d)}^p},
\end{equation}
such that $\|G_n-F_\ast\|_{L^{p'}(w_d)} =\dist(G_n, \mathcal{P}_{n,p'}^{\bot} )_{L^{p'}(w_d)},$
where
  $P^\ast$ denotes the unique  algebraic polynomial of degree $n$  such that
  \[
  \|P^\ast\|_{L^p(w_\ast)}=\dist(x^n, \mathcal{P}_{n-1})_{L^p(w_\ast)}\quad \text{with
  \ ${w}^\ast_d (t) =w_d(t) (1-t)$}.
  \]

\end{thm}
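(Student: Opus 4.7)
The plan is to express $\mathcal{N}(\sph;n)_p$ as the dual norm of point evaluation at a fixed $x_0\in\sph$ on $(\Pi_n^d,\|\cdot\|_p)$, symmetrize the representing functions to reduce the problem to one on $[-1,1]$, and then identify the extremal $F_\ast$ via the Euler--Lagrange characterization of the $L^p(w_d^\ast)$-Chebyshev polynomial $P^\ast$.

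First I would fix $x_0\in\sph$. By rotational invariance any $f\in\Pi_n^d$ can be rotated so that $\|f\|_\infty = f(x_0)$, hence $\mathcal{N}(\sph;n)_p$ equals the dual norm of $\ell:f\mapsto f(x_0)$ on $(\Pi_n^d,\|\cdot\|_p)$. The reproducing identity $f(x_0)=\int_{\sph} f(y)G_n(x_0\cdot y)\,d\sigma(y)$ together with Hahn--Banach gives
\[
\mathcal{N}(\sph;n)_p=\inf\bigl\{\|G\|_{L^{p'}(\sph)}:G-G_n(x_0\cdot\cdot)\in(\Pi_n^d)^\perp_{L^{p'}(\sph)}\bigr\}.
\]
Averaging $G$ over the stabilizer of $x_0$ in $SO(d+1)$ is a contraction on $L^{p'}$ (by Jensen), so we may take $G$ zonal: $G(y)=\widetilde{G}_0(x_0\cdot y)$. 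Using $\int_{\sph} h(x_0\cdot y)\,d\sigma(y)=\int_{-1}^1 h(t)w_d(t)\,dt$, the condition $G-G_n(x_0\cdot\cdot)\in(\Pi_n^d)^\perp$ becomes $\widetilde{G}_0-G_n\perp\mathcal{P}_n$ in $L^2(w_d)$; the substitution $F:=G_n-\widetilde{G}_0\in\mathcal{P}_{n,p'}^\perp$ then yields the duality formula $\mathcal{N}(\sph;n)_p=\dist(G_n,\mathcal{P}_{n,p'}^\perp)_{L^{p'}(w_d)}$.

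To identify the extremal, I would analyze the primal side under the same symmetrization: the supremum is attained by a zonal polynomial $f^\ast(y)=Q^\ast(x_0\cdot y)$, reducing to the problem of maximizing $Q(1)$ over $Q\in\mathcal{P}_n$ with $\|Q\|_{L^p(w_d)}=1$. A Lagrange-multiplier calculation (with multiplier $\lambda p=Q^\ast(1)=\mathcal{N}(\sph;n)_p$) gives
\[
\delta Q(1)=\mathcal{N}(\sph;n)_p\int_{-1}^1|Q^\ast|^{p-1}(\sign Q^\ast)\,\delta Q(t)\,w_d(t)\,dt,\qquad\forall\,\delta Q\in\mathcal{P}_n.
\]
Testing this with $\delta Q(t)=(1-t)R(t)$ for $R\in\mathcal{P}_{n-1}$ annihilates the left-hand side and yields $\int_{-1}^1|Q^\ast|^{p-1}(\sign Q^\ast)R(t)\,w_d^\ast(t)\,dt=0$ for every $R\in\mathcal{P}_{n-1}$, which is exactly the Euler--Lagrange characterization of the (monic) $L^p(w_d^\ast)$-extremal polynomial $P^\ast$. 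By uniqueness, $Q^\ast=P^\ast/\|P^\ast\|_{L^p(w_d)}$ and $\mathcal{N}(\sph;n)_p=P^\ast(1)/\|P^\ast\|_{L^p(w_d)}$.

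Finally, combining the Euler equation above with the reproducing identity $\delta Q(1)=\int \delta Q\,G_n\,w_d\,dt$ shows that $F_\ast:=G_n-\mathcal{N}(\sph;n)_p\,|Q^\ast|^{p-1}\sign Q^\ast$ lies in $\mathcal{P}_{n,p'}^\perp$; the relation $(p-1)p'=p$ together with $\|Q^\ast\|_{L^p(w_d)}=1$ gives $\|G_n-F_\ast\|_{L^{p'}(w_d)}=\mathcal{N}(\sph;n)_p$, and substituting the formula for $Q^\ast$ produces the explicit form stated in the theorem. I expect the main technical subtlety to be the endpoint $p=1$ (where strict convexity of $L^p$ fails): there the Lagrange argument must be replaced by the standard $L^1$--$L^\infty$ duality, and uniqueness of the $L^1(w_d^\ast)$-Chebyshev polynomial $P^\ast$ has to be invoked from the classical theory once the weight $w_d^\ast$ is seen to vanish only at the endpoints.
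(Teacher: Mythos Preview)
Your proposal is correct and follows essentially the same route as the paper: both establish the duality formula via the reproducing property and Hahn--Banach/distance duality, reduce to the zonal (univariate) setting, characterize the primal extremal $Q^\ast$ by an Euler--Lagrange orthogonality condition, and connect it to the $L^p(w_d^\ast)$-Chebyshev polynomial $P^\ast$ by testing with $(1-t)R(t)$. The only cosmetic differences are that the paper applies the abstract distance-duality formula directly on $L^{p'}(w_d)$ (rather than first applying Hahn--Banach on $\sph$ and symmetrizing), and it quotes Shapiro's characterization of best $L^p$ approximants in place of your Lagrange-multiplier computation---which also handles the $p=1$ endpoint uniformly.
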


 Before stating  the similar duality results on $\RR^d$,  we first  note that
\begin{equation}\label{extrem}
\mathcal{N}(\RR^d)_{p} =\sup\Bl\{ |f(0)|\colon\ \ f\in\mathcal{E}_p^d,\quad \|f\|_p=1\Br\},\quad 1\leq p<\infty.
\end{equation}
This holds because each $f\in\mathcal{E}_p^d$ achieves its maximum on $\RR^d$
{(due to the fact that $f(x)\to 0$ as $|x|\to \infty$ \cite[3.2.5]{Ni75})}
and the space $\mathcal{E}_p^d$ is invariant under the usual translations
on~$\RR^d$.

Duality formulas for functions in $\cE_p^d$ can now be stated as follows:

\begin{thm}\label{prop-f*}   The following statements hold:
  \begin{enumerate}[\rm (i)]
    \item For $1\leq p<\infty$, there exists an unique radial  extremizer
  $f_{*}\in \cE_{p}^{d}$ for the supremum in \eqref{extrem}  such that $\|f_{*}\|_{L^p(\RR^d)}=1$ and
  $f_{*}(0)=\mathcal{N}(\RR^d)_p$.
  Furthermore, such an extremizer can be characterized via the following identity:
    \begin{equation}\label{f-f*}
  g(0)=f_\ast(0)\int_{\R^{d}}g(x)|f_\ast(|x|)|^{p-1}\sign f_\ast(|x|)\,dx,\quad \forall\,g\in
  \cE_{p}^{d};
  \end{equation}
that is,  a    radial function $ f_\ast(|\Cdot|)\in \mathcal{E}_p^d$  with $\|f_\ast\|_{L^p(v_d)} =1$  is an extremizer   for  \eqref{extrem} if and only if  the condition \eqref{f-f*} is satisfied.

\item If $1\leq p<\f {2d}{d-1}$ and $\f 1p+\f 1{p'}=1$, then
\[
\mathcal{N}(\RR^d)_p=\f {V_d}{(2\pi)^d}\dist(j_{d/2}, \mathcal{E}_{p'}^{\bot})_{L^{p'} (v_d)},
\]
where  $\mathcal{E}_{p'}^{\bot}$ denotes the space of all functions $f\in L^{p'}(v_d)$ such that
$$\int_0^\infty f(t) g(t) v_d(t)\, dt =0\quad \text{whenever $g(|\Cdot|) \in \cE_p^d$}.$$

\item For  each  $1\leq p<\f {2d}{d-1}$,  there exists an
unique
extremizer $F_\ast \in \mathcal{E}_{p'}^{\bot}$, which takes the form
\begin{equation}\label{3-10-0}
F_\ast (t) = \f {V_d }{(2\pi)^d}j_{d/2}(t) -f_\ast(0) |f_\ast (t)|^{p-1} \sign f_\ast (t),\quad t\ge 0,
\end{equation}
 such that
\[
\Bl  \|F_\ast -
\f {V_d }{(2\pi)^d}\,j_{d/2}\Br\|_{L^{p'} (v_d)}=\f {V_d}{(2\pi)^d}\dist(j_{d/2}(|\Cdot|), \mathcal{E}_p^{\bot})_{L^{p'} (v_d)}.
\]
Here $f_\ast$ denotes the extremal function in (i).
     \end{enumerate}
\end{thm}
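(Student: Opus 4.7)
The three parts will be established in order: (i), (ii), (iii). Parts (ii) and (iii) both rely on (i), and (iii) also uses the distance formula from (ii).

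For (i), existence of an extremizer $f_*$ for \eqref{extrem} follows from a standard compactness argument. For $1<p<\infty$, use Banach--Alaoglu to extract a weakly convergent subsequence from a maximizing sequence $\{f_n\}\subset\cE_p^d$; since $\cE_p^d$ is a closed (hence weakly closed) subspace of $L^p(\RR^d)$ and the point evaluation at $0$ is a bounded linear functional on it (by the Nikolskii inequality), the weak limit $f_*$ satisfies $\|f_*\|_p\le 1$ and $f_*(0)=\mathcal{N}(\RR^d)_p$. For $p=1$, apply Montel--Vitali to the normal family $\{f_n\}$ (which is normal because of the pointwise bound $\|f_n\|_\infty\le C\|f_n\|_1$) and use Fatou on $\RR^d$ together with the fact that exponential type is preserved by locally uniform limits. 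To make $f_*$ radial, replace it by its rotational average $\wt f_*(x):=\int_{SO(d)}f_*(Rx)\,dR$: this is radial, still in $\cE_p^d$, preserves the value at $0$, and has $L^p$-norm no larger than $\|f_*\|_p$ by Minkowski's integral inequality. Uniqueness among radial $f_*$ with $\|f_*\|_p=1$ and $f_*(0)=\mathcal{N}(\RR^d)_p$ follows from strict convexity of $L^p$ for $1<p<\infty$; the case $p=1$ requires a separate argument based on the sign structure of $f_*$. The necessity of \eqref{f-f*} comes from a Lagrange--G\^ateaux calculation: differentiating $\va\mapsto(f_*+\va g)(0)/\|f_*+\va g\|_p$ at $\va=0$ and equating to $0$ yields precisely \eqref{f-f*} after using $\|f_*\|_p=1$ (it suffices to verify this for radial $g$, since both sides of \eqref{f-f*} depend only on the rotational average of $g$). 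Sufficiency is immediate: H\"older's inequality applied to \eqref{f-f*} gives $|g(0)|\le f_*(0)\|g\|_p$ for all $g\in\cE_p^d$, so $\mathcal{N}(\RR^d)_p\le f_*(0)$, with equality since $f_*$ is itself admissible.

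For (ii), by part (i) the supremum in \eqref{extrem} is attained by a radial function, so $\mathcal{N}(\RR^d)_p$ equals the supremum of $|f(0)|$ over radial $f\in\cE_p^d$ with $\|f\|_{L^p(v_d)}\le 1$. Using the reproducing formula \eqref{f-K}, valid for $1\le p<2d/(d-1)$, for radial $f$ one has $f(0)=\int_0^\infty K(t)f(t)v_d(t)\,dt$; any $F\in\cE_{p'}^{\bot}$ annihilates radial functions in $\cE_p^d$, so $f(0)=\int_0^\infty(K-F)fv_d\,dt$, and H\"older yields $|f(0)|\le\|K-F\|_{L^{p'}(v_d)}\|f\|_{L^p(v_d)}$. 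Taking the infimum over $F$ and the supremum over unit-norm radial $f$ gives $\mathcal{N}(\RR^d)_p\le\dist(K,\cE_{p'}^{\bot})_{L^{p'}(v_d)}$. The reverse inequality comes from \eqref{3-1-19}: the annihilator of $\cE_{p'}^{\bot}$ in $L^p(v_d)$ is precisely the radial subspace of $\cE_p^d$, which is closed in $L^p(v_d)$ by Paley--Wiener (the spectrum condition is preserved in the radial setting). Therefore $\dist(K,\cE_{p'}^{\bot})_{L^{p'}(v_d)}=\sup\{|\!\int_0^\infty Kfv_d\,dt|:f\in\cE_p^d\ \text{radial},\ \|f\|_{L^p(v_d)}\le 1\}=\mathcal{N}(\RR^d)_p$. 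Substituting $K=\frac{V_d}{(2\pi)^d}j_{d/2}$ gives the stated formula.

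For (iii), define $F_*$ by \eqref{3-10-0} and verify $F_*\in\cE_{p'}^{\bot}$ by direct substitution. For radial $g\in\cE_p^d$, the first term of $\int_0^\infty F_*gv_d\,dt$ is $g(0)$ by \eqref{f-K}, while the second term equals $f_*(0)\int_0^\infty g|f_*|^{p-1}\sign f_*\,v_d\,dt=g(0)$ by \eqref{f-f*} (applied to radial $g$, converting the $\RR^d$-integral into one over $[0,\infty)$ with weight $v_d$). The two cancel. Since $(p-1)p'=p$, a direct computation yields
\[
\|K-F_*\|_{L^{p'}(v_d)}=f_*(0)\Bigl(\int_0^\infty|f_*|^{(p-1)p'}v_d\,dt\Bigr)^{1/p'}=f_*(0)\|f_*\|_{L^p(v_d)}^{p/p'}=f_*(0)=\mathcal{N}(\RR^d)_p,
\]
matching the distance from (ii) and identifying $F_*$ as a minimizer. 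Uniqueness of the minimizer in the closed affine set $\frac{V_d}{(2\pi)^d}j_{d/2}-\cE_{p'}^{\bot}$ follows from strict convexity of $L^{p'}(v_d)$ for $1<p'<\infty$, i.e.\ for $1<p<2d/(d-1)$; the boundary case $p=1$ needs an extra argument using the uniqueness and sign pattern of $f_*$ from part (i).

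The principal obstacle is handling the non-reflexive boundary case $p=1$ throughout (compactness in (i), uniqueness in both (i) and (iii)), which forces a shift from weak compactness and strict convexity to Montel's theorem and the zero structure of the extremal function. A secondary technical point is justifying the differentiability of $\va\mapsto\|f_*+\va g\|_p^p$ in the variational argument for (i), which is handled by convexity of $t\mapsto|t|^p$ together with dominated convergence.
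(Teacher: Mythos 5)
Your overall architecture matches the paper's: compactness/normal-family argument for existence, averaging over rotations to get a radial extremizer, a variational characterization equivalent to Shapiro's criterion for best $L^p$-approximation (the paper derives \eqref{f-f*} by noting $\|f_\ast\|_p=\dist(f_\ast,\{g\in\cE_p^d: g(0)=0\})_p$ and invoking that criterion, which is the same content as your G\^ateaux computation, provided you note that $f_\ast\neq 0$ a.e.\ — needed for two-sided differentiability when $p=1$ — which holds because the radial profile extends to a nonzero entire function), H\"older plus an annihilator/duality argument for (ii), the explicit construction \eqref{3-10-0} for (iii), and strict convexity for uniqueness when $p>1$. However, there is a genuine gap: the uniqueness of the extremizer for $p=1$ in part (i), which you only gesture at (``a separate argument based on the sign structure of $f_\ast$''), is the single hardest step of the theorem and is not supplied. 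The paper's argument is not a routine sign-pattern observation: one extends the radial profile of $f_\ast$ to an even entire function of exponential type, writes its Hadamard product, and then must prove that \emph{all} of its zeros are real and simple. This is done by contradiction: if $w$ were a nonreal (or multiple) zero, one factors $f_\ast(z)=a(z)g(z)$ with $a(z)=(z^2-w^2)(z^2-\bar w^2)\ge 0$ on $\R$, forms the family $f_t(z)=a(tz)g(z)\in\cE_1^d$ with $f_t(0)=f_\ast(0)$, and applies \eqref{f-f*} to each $f_t$; the right-hand side is then a degree-$4$ polynomial in $t$ that would have to be identically $1$, a contradiction. Only with reality and simplicity of the zeros in hand does the equality $\int f_{\ast\ast}\sign f_\ast=\|f_{\ast\ast}\|_1$ (coming from \eqref{f-f*}) force $f_{\ast\ast}$ to change sign exactly where $f_\ast$ does, hence to share its zero set, and then the Hadamard factorization with the common value at the origin yields $f_{\ast\ast}\equiv f_\ast$. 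Without the reality/simplicity claim, ``sign structure'' alone does not pin down the function, so as written your proof of (i) is incomplete precisely where the theorem's claim is nontrivial.

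Two smaller points. First, your uniqueness claim in (iii) for $p=1$ is likewise left unproved; here the paper's argument is short (the chain $f_\ast(0)=\int(K-F)f_\ast\le\|K-F\|_\infty\|f_\ast\|_1=f_\ast(0)$ forces equality in H\"older with exponents $(\infty,1)$, hence $K-F=f_\ast(0)\sign f_\ast$ a.e., so $F=F_\ast$), but it should be stated, and it again uses that $f_\ast\neq0$ a.e. Second, in (ii) your identification of the annihilator of $\cE_{p'}^{\bot}$ inside $L^p(v_d)$ via \eqref{3-1-19} is fine for $1<p<\frac{2d}{d-1}$, but for $p=1$ the ambient space is $L^\infty(v_d)$, whose dual is not $L^1(v_d)$, so that step does not apply verbatim; the clean fix (implicit in your part (iii)) is that the explicit $F_\ast$ of \eqref{3-10-0} lies in $\cE_{\infty}^{\bot}$ and satisfies $\|K-F_\ast\|_{\infty}=f_\ast(0)$, which gives the missing inequality $\dist\le\mathcal N(\R^d)_1$ directly — you should say so rather than rely on the $L^\infty$ duality.
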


As pointed out in the introduction, the main goal in this paper is to estimate the following normalized
Nikolskii constant for $p=1$: $$\cL^{*}(d)=(2\pi)^{d}V_{d}^{-1}\mathcal N(\R^d)_1.$$
By  the
Paley--Wiener--Schwarz theorem \cite{NW78},  if  $f\in \mathcal{E}_1^d$,  then $\supp \wh{f}\subset \B^{d}$, and hence for any $r\ge 1$,
\begin{align*}
0&=\int_{\S^{d-1}}\wh{f}(r\xi)\,d\sa(\xi)
=(\wh{D_r f})\ast (d\sa) (0)\\
&=\int_{\R^d} (D_r f)(-x) j_{\f d2-1}(|x|) \, dx =\int_{\RR^d} f(x) j_{\f d2-1} (r|x|)\, dx,
\end{align*}
where $D_r f(x) =r^{-d} f(x/r)$, and in the third step we used the fact that  the distributional Fourier transform of $j_{d/2-1}(|\Cdot|)$ is the normalized Lebesgue measure $d\sa$ on $\SS^{d-1}$. This shows that $j_{\f d2-1} (r|\Cdot|) \in \cE_\infty^{\bot}$ for any $r\ge 1$, as desired.

Since
$j_{\f d2-1}(r \cdot) \in \cE_\infty^{\bot}$ for any $r\ge 1$, by  Theorem \ref{prop-f*}  we obtain

\begin{cor}\label{cor-L*}
  For $d\ge 2$, we have
  \[
\cL^\ast(d)\leq   \inf_{a_k\in\RR, r_k\ge 1} \Bigl\|j_{\frac
    d2}(\cdot)-\sum_{k=1}^{\infty}a_{k}j_{\frac
    d2-1}(r_{k}\cdot )\Bigr\|_{L^{\infty}(\R)}
  \]
  with  the infimum being taken over all sequences $\{a_k\}_{k=1}^\infty\subset \R$ and $\{r_k\}_{k=1}^\infty \subset [1,\infty)$ such that $\sum_{k=1}^\infty a_k j_{\f d2-1} (r_k \cdot)$
   converges absolutely  to an essentially  bounded function on $[0,\infty)$.
\end{cor}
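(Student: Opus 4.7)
The plan is to combine the duality formula from Theorem~\ref{prop-f*}(ii) with the observation, just established in the paragraph preceding the corollary, that $j_{\f d2-1}(r\,|\Cdot|)\in\cE_\infty^{\bot}$ for every $r\ge 1$. First, I would apply Theorem~\ref{prop-f*}(ii) with $p=1$ and invoke the definition $\cL^\ast(d)=(2\pi)^d V_d^{-1}\mathcal{N}(\RR^d)_1$ to rewrite
\[
\cL^\ast(d)=\dist(j_{\f d2},\cE_\infty^{\bot})_{L^\infty(v_d)}.
\]
Since $v_d(t)=\o_{d-1}t^{d-1}$ is strictly positive on $(0,\infty)$, the weighted norm $\|\Cdot\|_{L^\infty(v_d)}$ coincides with the ordinary essential supremum on $[0,\infty)$; because $j_{d/2}$ and each $j_{d/2-1}(r_k\Cdot)$ are even, this equals the $L^\infty(\RR)$-norm appearing in the statement of the corollary.

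The next step is to show that for any admissible sequences $\{a_k\}\subset\RR$ and $\{r_k\}\subset[1,\infty)$, the function
\[
F(t):=\sum_{k=1}^\infty a_k\,j_{\f d2-1}(r_k t)
\]
lies in $\cE_\infty^{\bot}$. Membership in $L^\infty(v_d)$ is guaranteed by the hypothesis that the series converges (absolutely a.e.) to an essentially bounded function. For the orthogonality condition $\int_0^\infty F(t)g(t)v_d(t)\,dt=0$ for every radial $g(|\Cdot|)\in\cE_1^d$, each individual term $j_{\f d2-1}(r_k\Cdot)$ already satisfies this by the preceding discussion, so the question reduces to interchanging the sum and the integral. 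This I would justify by a dominated (or bounded) convergence argument: $g\in L^1(v_d)$ because $g(|\Cdot|)\in\cE_1^d\subset L^1(\RR^d)$, and the absolute convergence assumption on $F$ provides the required control of the partial sums.

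Once $F\in\cE_\infty^{\bot}$ is established, the distance interpretation gives
\[
\cL^\ast(d)\le \Bigl\|j_{\f d2}(\cdot)-F\Bigr\|_{L^\infty(\RR)},
\]
and taking the infimum over all admissible $\{a_k\},\{r_k\}$ yields the claim. The only genuine (if modest) technical obstacle is the justification of the term-by-term integration in verifying the orthogonality condition; everything else is a formal consequence of the duality identity in Theorem~\ref{prop-f*}(ii) together with the displayed computation immediately preceding the corollary.
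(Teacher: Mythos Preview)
Your proposal is correct and follows exactly the route the paper takes: the paper derives the corollary in one line from Theorem~\ref{prop-f*}(ii) together with the immediately preceding observation that $j_{\f d2-1}(r\,\Cdot)\in\cE_\infty^{\bot}$ for $r\ge 1$. You have in fact been more careful than the paper, which does not explicitly justify that the infinite linear combination $F$ remains in $\cE_\infty^{\bot}$; your remark about the interchange of sum and integral is the only point requiring care, and it is the one point the paper leaves implicit.
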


\subsection{Proof of Theorem \ref{thm-dual-S}}
 For simplicity, we write $d\mu_d(t)= w_d(t)\, dt$ and $d\mu^\ast_d(t)= w^\ast_d(t)\, dt$ in the proof below.

We start with the proof of \eqref{3-3-1}.
Using orthogonality of spherical harmonics,  we have that for any $f\in\Pi_n^d$ and $x\in\sph$,
\[
f(x)=\int_{\S^{d}}(G_{n}(x\cdot y)-F(x\cdot y))f(y)\,d\s(y),\quad \forall\,F\in \mathcal{P}_n^{\bot}\cap L^{p'}(w_d).
\]
It follows by  H\"older's inequality  that
\begin{align*}
\mathcal{N}(\sph,n)_p &=\sup_{0\neq f\in\Pi_n^d} \f {\|f\|_\infty}{\|f\|_p} \leq
\inf\bigl\{\|G_{n}-F\|_{L^{p'}(w_d)}\colon    F\in \mathcal{P}_n^{\bot}\cap L^{p'}(w_d)
\bigr\}\\
&=\dist(G_n, \mathcal{P}_{n,p'}^{\bot})_{L^{p'}(w_d)}.
\end{align*}

To show the lower estimate $$\mathcal{N}(\sph,n)_p\ge \dist(G_n, \mathcal{P}_{n,p'}^{\bot})_{L^{p'}(w_d)},$$
we  use the duality formula \eqref{3-1-19} with $E:=\mathcal{P}_{n,p'}^{\bot}\subset L^{p'}(w_d)$.  Here, if  $p=1$,  then we use $C[-1,1]$  in place of  $L^\infty$, and recall that
 $(C[-1,1])^\ast$  is the space of Radon measures on $[-1,1]$ with the norm given by the total variation of a measure.    Since $G_j\subset E$ for any $j>n$,  it follows that  $E^{\bot}=\mathcal{P}_n$. Thus, using \eqref{3-1-19} , we obtain
\begin{align}
\dist(G_n, \mathcal{P}_{n,p'}^{\bot})_{L^{p'}(w_d)} &=\sup_{\sub{\ell\in (\mathcal{P}_{n,p'}^{\bot})^{\bot}\\
  \|\ell\|\leq 1 }} |\la \ell, G_n\ra|=\sup_{\sub{\|P\|_{L^{p}(w_d)}\leq 1\\
    P\in \mathcal{P}_n}} \Bl|\int_{-1}^1 P(t)G_n(t) d\mu_d(t)\Br|\notag\\
&=\sup_{\sub{\|P\|_{L^{p}(w_d)}\leq 1\\
      P\in \mathcal{P}_n}} |P(1)|
  \leq \mathcal{N}(\sph; n)_p.\label{3-5-0}
\end{align}
This proves \eqref{3-3-1}.

Next, we show  the existence of the extremal function $F_\ast$ and the formula \eqref{3-4-0}.
The proof relies on  the following  characterization of best approximants in $L^p$-spaces.

\begin{lem}[{\cite[4.2.1, 4.2.2]{Sh71}}]\label{lem-dual}
  Let $Y$ be a closed  real subspace of $L^{p}(Q,d\mu)$ for some measure space $(Q, \mu)$ and $1\le
  p<\infty$.  Let  $f\in L^{p}(d\mu)$ . If      $p=1$, we assume in addition that  $f(x)\neq 0$ for $\mu$-a.e. $x\in Q$.
  Then  a function $g\in Y$ is the best approximant to $f$ from the space $Y$ in $L^p$-metric (i.e.,
  $\|f-g\|_p =\dist(f, Y)_p$)  if and only if
  \[
  \int_Q \Bl(|f-g|^{p-1}\sign
  (f-g)\Br) h \, d\mu =\int_Q \f {|f-g|^p} {f-g}\,h \, d\mu=0,\quad \forall\,h\in Y.
  \]
\end{lem}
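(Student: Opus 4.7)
The plan is to establish this well-known characterization of best $L^{p}$-approximants by splitting into the smooth case $1 < p < \infty$, which is handled by direct convex-analytic variation, and the non-smooth case $p = 1$, which requires Hahn--Banach duality.

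For $1 < p < \infty$, the functional $J(g) := \|f - g\|_{p}^{p}$ is strictly convex on $L^{p}(Q,d\mu)$, and $g \in Y$ is a best approximant iff it minimizes $J$ over $Y$. I would compute its G\^ateaux derivative in a direction $h \in Y$ by differentiating under the integral, justified by the elementary pointwise bound $\bigl||a - s|^{p} - |a|^{p}\bigr| \le p|s|(|a| + |s|)^{p-1}$ and H\"older's inequality applied with $f - g \in L^{p}$ and $h \in L^{p}$. This yields
$$\frac{d}{dt}\bigg|_{t = 0} J(g + th) = -p\int_{Q} |f - g|^{p-1}\sign(f - g)\, h\, d\mu.$$
By convexity, $g$ minimizes $J$ on $Y$ iff this derivative vanishes in every direction $h \in Y$, which is exactly the stated orthogonality condition.

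For $p = 1$ the $L^{1}$-norm is not G\^ateaux differentiable, so I would instead invoke the Hahn--Banach description of the distance,
$$\dist(f, Y)_{1} = \sup\Bigl\{\int f\psi\, d\mu \,:\, \psi \in L^{\infty}(d\mu),\ \|\psi\|_{\infty} \le 1,\ \int h\psi\, d\mu = 0 \text{ for all } h \in Y\Bigr\}.$$
Sufficiency is then immediate: if $\int \sign(f - g)\,h\,d\mu = 0$ for every $h \in Y$, then for any $g' \in Y$,
$$\|f - g\|_{1} = \int \sign(f - g)(f - g)\, d\mu = \int \sign(f - g)(f - g')\, d\mu \le \|f - g'\|_{1},$$
so $g$ is a best approximant. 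For necessity, given a best approximant $g$, one produces from the dual formulation a $\psi$ attaining the supremum; the equality case in H\"older forces $\psi = \sign(f - g)$ almost everywhere on the set $\{f \ne g\}$, and the assumption $f \ne 0$ a.e.\ ensures that the sign of $f - g$ is unambiguously defined on a set of full measure, so that $\psi$ may be identified with $\sign(f - g)$ globally.

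The only genuinely delicate point, and the reason for the extra hypothesis when $p = 1$, is the necessity direction in the non-smooth case: with no derivative of the norm available one cannot bypass Hahn--Banach, and one has to resolve the ambiguity of $\sign(f - g)$ on the zero set $\{f = g\}$. The hypothesis $f \ne 0$ a.e.\ is exactly what forces the subdifferential of $\|\cdot\|_{1}$ at $f - g$ to reduce to a single well-defined element of $L^{\infty}$, collapsing the characterization into the clean "functional orthogonality" form stated above.
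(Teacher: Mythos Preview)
The paper does not supply a proof of this lemma; it is quoted from Shapiro's book. So there is nothing in the paper to compare against, and I evaluate your argument on its own merits.

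Your treatment of $1 < p < \infty$ is correct, as is the $p = 1$ sufficiency argument.

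There is a genuine gap in the $p = 1$ necessity direction. You claim that ``the assumption $f \ne 0$ a.e.\ ensures that the sign of $f - g$ is unambiguously defined on a set of full measure.'' This does not follow: the hypothesis constrains the zero set of $f$, not of $f - g$, and for $g \ne 0$ these are unrelated. Concretely, take $Q = [0,3]$ with Lebesgue measure, $Y$ the constants, and $f = \chi_{[0,2]} + 3\chi_{(2,3]}$. Then $f > 0$ everywhere, the unique best constant approximant is $g \equiv 1$, yet $\int_0^3 \sign(f - g)\cdot 1\,d\mu = 1 \ne 0$; so the equivalence, read literally under the stated hypothesis, fails.

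What is actually true, and is the content of Shapiro's Theorems~4.2.1--4.2.2, is the orthogonality characterization under the assumption $\mu(\{f = g\}) = 0$; without it one only obtains the weaker criterion $\bigl|\int_{\{f \ne g\}}\sign(f-g)\,h\,d\mu\bigr|\le\int_{\{f=g\}}|h|\,d\mu$ for all $h\in Y$. The paper applies the lemma exclusively with $g = 0$ (to $P_\ast$ and to $f_\ast$), where ``$f \ne 0$ a.e.'' and ``$f - g \ne 0$ a.e.'' coincide, so no harm is done downstream. With the hypothesis corrected to $\mu(\{f = g\}) = 0$, your Hahn--Banach argument goes through: the extremal $\psi$ in the annihilator of $Y$ with $\|\psi\|_\infty \le 1$ (whose existence tacitly uses $(L^1)^\ast = L^\infty$, i.e.\ $\sigma$-finiteness of $\mu$) is then forced to equal $\sign(f - g)$ a.e., and the orthogonality follows.
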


Now we continue the proof of Theorem \ref{thm-dual-S}.  Note first that by \eqref{3-3-1} and \eqref{3-5-0},
\begin{equation*}
\mathcal{N}(\sph; n)_p =\max\Bl\{ P(1)\colon\ \ P\in\mathcal{P}_n,\  \ \|P\|_{L^p(w_d)}\leq 1\Br\}.
\end{equation*}
Let $P_\ast\in\mathcal{P}_n$ denote  the maximizer for the maximum  in this last equation. Then $\|P_\ast\|_{L^p(w_d)} =1$ and
\[
P(1) \leq P_\ast (1) \|P\|_{L^p(w_d)},\quad \forall\,P\in \mathcal{P}_n.
\]
In particular, this implies that for any
$$ \mathcal{P}_{n,0}:=\{ P\in \mathcal{P}_n\colon\ \ P(1)=0\},$$
we have
\begin{align*}
 P_\ast (1)&\leq P_\ast (1) \inf_{P\in\mathcal{P}_{n,0}} \|P_\ast-P\|_{L^p(w_d)}=P_\ast (1) \dist(P_\ast, \mathcal{P}_{n,0})_{L^p(w_d)}\\
 &\leq P_\ast(1) \|P_\ast\|_{L^p(w_d)}=P_\ast(1).
\end{align*}
Thus,
$$1= \|P_\ast\|_{L^p(w_d)} =  \dist(P_\ast, \mathcal{P}_{n,0})_{L^p(w_d)}.$$
It then follows from  Lemma \ref{lem-dual} that
\begin{equation}\label{3-10}
\int_{-1}^1 (|P_\ast|^{p-1} \sign (P_\ast)) P\,d\mu_d=P(1) \int_{-1}^1 |P_\ast|^{p-1} \sign (P_\ast)\,d\mu_d,\quad \forall\,P\in\mathcal{P}_n.
\end{equation}
Setting $P=P_\ast$ in \eqref{3-10},
we obtain
\begin{equation}\label{3-8}
1=\|P_\ast\|_{L^p(w_d)}^p =P_\ast (1) \int_{-1}^1 |P_\ast|^{p-1} \sign (P_\ast)\,d\mu_d.
\end{equation}
Multiplying both sides of \eqref{3-10} by $P_\ast (1)$ and using \eqref{3-8}, we have
\[
P_\ast(1) \int_{-1}^1 (|P_\ast|^{p-1} \sign (P_\ast)) P d\mu_d=P(1) =\int_{-1}^1 P(t) G_n(t) d\mu_d (t),\quad \forall\,P\in\mathcal{P}_n.
\]
This implies that
$$ F_\ast(t):=G_n(t)-P_\ast(1) |P_\ast(t)|^{p-1} \sign (P_\ast(t))
\in\mathcal{P}_{n,p'}^{\bot}.$$
Note also that
$$ \|G_n-F_\ast\|_{L^{p'}(w_d)} =P_\ast (1)\|P_\ast\|_{L^p(w_d)}^{p-1} =P_\ast (1) =\dist(G_n, \mathcal{P}_{n,p'})_{L^{p'}(w_d)}.$$
This shows  \eqref{3-4-0} and   that $F_\ast$ is the desired extremal function.

Finally, we  point out that the connection of $P_\ast$ with the extremal polynomial $P^\ast$ was proved in \cite{AD14}. For completeness, we include a proof of the identity $P_\ast =P^\ast/\|P^\ast\|_{L^p(w_d)}$ here.
By \eqref{3-10}, we have
\begin{align*}
\int_{-1}^1 |P_\ast|^{p-1}\sign(P_\ast) \f {P(t)-P(1)}{1-t} w_d^\ast (t)\, dt =0,\quad \forall\,P\in\mathcal{P}_{n},
\end{align*}
or equivalently,
\begin{align}\label{3-13}
\int_{-1}^1 |P_\ast|^{p-1}\sign(P_\ast) P(t)  w_d^\ast (t)\, dt =0,\quad \forall\,P\in\mathcal{P}_{n-1}.
\end{align}
By Lemma \ref{lem-dual}, this implies that
$$\|P_\ast\|_{L^p(w_\ast)} =\dist(P_\ast, \mathcal{P}_{n-1})_{L^p(w^\ast)}=|L_{n, P_\ast}| \dist(x^n, \mathcal{P}_{n-1})_{L^p(w_d^\ast)},$$
where $L_{n, P_\ast}$ denotes the leading coefficient of the $n$-th degree polynomial $P_\ast$.
Note that by \eqref{3-13}, we have $\deg(P_\ast) =n$ and all the zeros of $P_\ast$ are simple and inside the interval $(-1,1)$. Since $P_\ast (1)>0$, we must  have $L_{n, P_\ast}>0$.
It then follows that
$$P_\ast = L_{n, P_\ast} P^\ast =\f {P^\ast}{\|P^\ast\|_{L^p(w_d)}}.$$

This completes the proof of  Theorem \ref{thm-dual-S}.
\subsection{Proof of Theorem \ref{prop-f*}}

We start with the proof of (i), which  relies on the following compactness  result on entire functions of exponential type.

\begin{lem}[{\cite[3.3.6]{Ni75}}]\label{lem-comp} Let $1\leq p<\infty$ and let $\mathcal{B}_p^d:=\{f\in \cE_p^d\colon\ \|f\|_p\leq 1\}$.  Then every sequence of functions from the class $\mathcal{B}_p^d$  contains a subsequence which converges uniformly to a function $f\in\mathcal{B}_p^d$ on every compact subset of $\RR^d$. \end{lem}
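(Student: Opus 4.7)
The plan is to deduce the result from Montel's theorem on normal families of holomorphic functions on $\CC^d$, after upgrading the hypothesis $\|f\|_p\le 1$ to a uniform pointwise bound on the family $\mathcal{B}_p^d$ on compact subsets of $\CC^d$.

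First I would establish a (crude) Nikolskii inequality
\[
\|f\|_{L^\infty(\RR^d)}\le C_{d,p}\,\|f\|_{L^p(\RR^d)},\qquad f\in\cE_p^d,\ \ 1\le p<\infty.
\]
In the range $1\le p<2d/(d-1)$ this is immediate from the reproducing identity \eqref{f-K} and H\"older's inequality, since the kernel $K(|\cdot|)=V_d(2\pi)^{-d}j_{d/2}(|\cdot|)$ lies in $L^{p'}(\RR^d)$ by the decay estimate \eqref{2-4}. For the remaining range I would bootstrap: if $f\in\cE_p^d$ has type $1$, then for any integer $N\ge 1$ the function $h(x):=f(x/N)^{N}$ is entire of spherical type at most $1$ and lies in $L^{p/N}(\RR^d)$; choosing $N$ large enough to push $p/N$ into the tractable range, applying the first case to $h$, and extracting $N$-th roots yields the desired inequality for the original $p$. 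Consequently $\|f\|_{L^\infty(\RR^d)}\le C_{d,p}$ uniformly in $f\in\mathcal{B}_p^d$.

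Next I would use the exponential-type hypothesis to extend this pointwise bound from $\RR^d$ to $\CC^d$. The Paley--Wiener type estimate recalled in Section~\ref{sec:2} states that
\[
|f(z)|\le \|f\|_{L^\infty(\RR^d)}\,e^{|\Im(z)|},\qquad z\in\CC^d,\ \ f\in\cE_p^d,
\]
so combining with the previous step gives $|f(z)|\le C_{d,p}\,e^{|\Im(z)|}$ uniformly in $f\in\mathcal{B}_p^d$. Hence $\mathcal{B}_p^d$ is a locally uniformly bounded family of entire functions on $\CC^d$.

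Finally I would apply Montel's theorem in several complex variables: any locally uniformly bounded family of holomorphic functions on $\CC^d$ is normal, so a given sequence $\{f_n\}\subset\mathcal{B}_p^d$ has a subsequence $\{f_{n_k}\}$ converging uniformly on compact subsets of $\CC^d$ to an entire function $g$. Passing the bound $|f_{n_k}(z)|\le C_{d,p}\,e^{|\Im(z)|}$ to the limit shows that $g$ has spherical exponential type at most $1$ and is bounded on $\RR^d$. Since $f_{n_k}\to g$ pointwise on $\RR^d$, Fatou's lemma yields $\|g\|_{L^p(\RR^d)}\le \liminf_k\|f_{n_k}\|_{L^p(\RR^d)}\le 1$, so $g\in\mathcal{B}_p^d$, and uniform convergence on compacta of $\RR^d\subset\CC^d$ is automatic. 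The main obstacle is the Nikolskii $L^p\!\to\!L^\infty$ inequality in the large-$p$ range; once that and the resulting pointwise control on $\CC^d$ are in hand, the rest of the argument is the standard normal-families plus Fatou package.
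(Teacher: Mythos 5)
Your argument is correct, and it is worth noting that the paper does not prove this lemma at all: it is quoted directly from Nikolskii's book \cite[3.3.6]{Ni75}, whose proof is essentially the same package you assemble (an $L^p\to L^\infty$ inequality of different metrics, the bound $|f(z)|\le\|f\|_{L^\infty(\R^d)}e^{\sigma|\Im z|}$, compactness of locally bounded families of entire functions, and a Fatou argument to keep the limit in the unit ball). So you have reconstructed a self-contained proof of the cited fact rather than diverged from the paper. Two small remarks. First, the Nikolskii inequality $\|f\|_{L^\infty(\R^d)}\le C_{d,p}\|f\|_{L^p(\R^d)}$ for $f\in\cE_p^d$ is already recorded in Section~3 of the paper (the statement that $\cE_p^d\subset\cE_q^d$ with $\|f\|_q\le C\|f\|_p$), so you could simply invoke it; your derivation of it is nevertheless sound: for $1\le p<\frac{2d}{d-1}$ it follows from \eqref{f-K}, H\"older, and the fact that $K\in L^{p'}$ by \eqref{2-4}, and the dilation-power trick $h(x)=f(x/N)^N$ does preserve spherical type $1$ and lies in $L^{p/N}$ with $\|h\|_{p/N}=N^{dN/p}\|f\|_p^N$. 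Second, in that bootstrap the phrase ``$N$ large enough'' should be ``$N$ chosen so that $1\le p/N<\frac{2d}{d-1}$'' (too large an $N$ pushes $p/N$ below $1$, where your kernel argument does not apply as stated); such an $N$ always exists, e.g.\ $N=\lfloor p\rfloor$, since $p/\lfloor p\rfloor<2<\frac{2d}{d-1}$ (with the convention that the right-hand side is $+\infty$ for $d=1$). With that choice made explicit, the chain Montel $\Rightarrow$ locally uniform limit $g$ with $|g(z)|\le C_{d,p}e^{|\Im z|}$ $\Rightarrow$ type at most $1$, plus Fatou giving $\|g\|_p\le 1$, completes the proof as you describe.
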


By \eqref{extrem},  there exists a sequence of functions $\{f_{n}\}\subset \mathcal{B}_p^d$ such that $\lim_{n\to\infty}  f_{n}(0)=\mathcal{N}(\RR^d)_p$.   By  Lemma~\ref{lem-comp},  without loss of generality, we may assume that $\{f_n\}_{n=0}^\infty$ converges uniformly to a function $f\in\mathcal{B}_p^d$ on every compact subset of $\RR^d$ (since otherwise we consider a subsequence of $\{f_n\}$). Then
  \[
  f(0)=\lim_{n\to \infty} f_{n}(0)=\mathcal{N}(\RR^d)_p.
  \]
  Now consider the following  radial part of the function $f$:
  \[
  f_{*}(|x|):=\int_{\S^{d-1}}f(|x|\xi)\,d\sa(\xi),\quad x\in\R^d.
  \]
  For convenience, we will identify $f_\ast$ with the radial function $f_\ast(|\Cdot|)$ on $\RR^d$.  It is easily seen that $f_{*}\in \cE_p^d$  and $f_{*}(0)=f(0)=\mathcal{N}(\RR^d)_p$ . Thus, $f_{*}$ is
  the extremal function in \eqref{extrem}; that is,
  \begin{equation}\label{3-16}
    |f(0)|\leq f_\ast(0) \|f\|_p,\quad \text{for every $f\in \cE_p^d$}.
  \end{equation}
  The proof of the characterization \eqref{f-f*}  of $f_\ast$ follows exactly as that for spherical polynomials on the sphere. Indeed,  applying \eqref{3-16} to $f=f_\ast -h$ with $g\in\cE_p^d$ and $g(0)=0$, we obtain
  $$\|f_\ast\|_p =\dist(f_\ast, \mathcal{H})_p,$$
  where
  $\mathcal{H}:=\{f\in \cE_p^d\colon\ f(0)=0\}.$  By Lemma \ref{lem-dual}, this implies that $|f_\ast|^{p-1}\sign (f_\ast) \bot \mathcal{H}$, which is equivalent to \eqref{f-f*}.
  That \eqref{f-f*} implies that $f_\ast\in\mathcal{E}_p^d$ is the desired extremal function follows directly from H\"older's inequality.

  Next, we show the uniqueness of $f_\ast$.  For $1<p<\infty$, the uniqueness follows directly from  the  strict convexity of the space $L^p$.
    It remains to deal with the case $p=1$.
    We consider $f_\ast$ as a  function in one variable so that  $f_\ast $ is an even  entire function of exponential type  on $\CC$.  By  the classical
  Hadamard theorem (see, e.g., \cite[Ch.~2]{Bo54}), it follows that
  $$f_\ast (z) = f_\ast(0) e^{bz} \prod_{n\in\ZZ\setminus \{0\}}  \Bl (1-\f {z}{z_n}\Br) \exp\Bl( \f z{z_n}\Br),$$
  where $b\in\CC$ and $\{z_n\}_{n\in\ZZ\setminus \{0\}}$ is the sequence of all nonzero zeros of $f_\ast$.
  Since $f_\ast$ is an even function, we may assume that
  $z_n=z_{-n}$ for all positive integers $n$.  Thus, we must have that $b=0$, and
  \begin{equation}\label{Hadamard}
    f_{*}(z)=f_{*}(0)\prod_{n=1}^{\infty}\Bigl(1-\frac{z^{2}}{z_{n}^{2}}\Bigr),\quad
  z\in \C.
  \end{equation}
We  further claim that all the zeros of $f_\ast$ must be simple and real (i.e., the numbers $z_n$ are distinct and real).  To see this, we first recall that $f_\ast\br|_{\RR}$ is real valued, which implies
$\overline{f_\ast (z)}=f_{\ast}(\bar{z})$ for every $z\in \CC$.
Thus, if the claim were not true, then for a nonzero complex zero $w$ of $f$, we have the decomposition
$$ f(z) = (z^2-w^2) (z^2-\bar{w}^2) g(z)=a(z)g(z),$$
where $g\in \cE_1^1$, and $a(z)= z^4-(w^2+\bar{w}^2)z^2 +|w|^4$.   It is easily seen that $a(x)\ge 0$ for all $x\in\RR$ and the  equality holds only in the case when $x=w\in\RR$. Thus,  $\sign(f_\ast)(x) =\sign(g)(x)$ for all $x\in\RR\setminus \{w\}$. Now consider the functions
$$f_t (z) = a(tz) g(z),\quad z\in\CC,\quad t\in\RR.$$
Clearly, $f_t(|\Cdot|) \in \cE_1^d$ and $f_\ast(0) =f_t(0)>0$ for all $t\in\RR$. Thus, \eqref{f-f*} implies
$$ 1=\f {f_t(0)}{f_\ast (0)} =\int_{\RR^d} \sign(f_\ast) (|x|) f_t(|x|) \, dx =\int_{\RR^d} a(t|x|) |g(|x|)|\, dx,\quad \forall\,t\in\RR.$$
This last term in this last equation  is a polynomial in $t\in\RR$ of degree $4$, which can not be constant.  We obtain a contradiction and hence  prove the claim.

  Now assume that $f_{\ast\ast}$ is another radial  extremizer for the supremum in \eqref{extrem}.  Then  \eqref{f-f*} implies
  \[
  \|f_{\ast\ast}\|_1=1=\frac{f_{\ast\ast}(0)}{f_{*}(0)}=\int_{\R^{d}}f_{\ast\ast}(x)\sign
  f_{*}(x)\,dx.
  \]
It follows that
   $|f_{\ast\ast}(x)|= f_{\ast\ast}(x)\sign f_{*}(x)$  for a.e.  $x\in \RR$.   Since all the zeros of $f_\ast$ are simple, it follows that $f_\ast$ changes signs at each of its zeros.  By continuity, this further implies that $|f_{\ast\ast}(x)|=f_{\ast\ast}(x) \sign f_\ast (x)$ for every $x\in\RR$.    By symmetry, we also have $|f_{*}|\equiv
  f_{*}\sign f_{\ast\ast}$.   This means that $f_{\ast\ast}$ and $f_{*}$ have common zeros. By \eqref{Hadamard} and the above claim, we conclude that  $f_{\ast\ast}\equiv
  f_{*}$, proving the uniqueness.

We point out that the proofs of the duality formulas \eqref{3-3-1} and \eqref{3-4-0}  are very similar to those  of \eqref{3-3-1} and \eqref{3-4-0} for
spherical polynomials on $\sph$.  We skip the details.

 Finally,  we show the uniquenss of the extremal function  $F_{*}$ defined by \eqref{3-10-0}.    For
 $p> 1$, the uniqueness follows directly of  strict convexity of the $L^p$-norm. It remains to consider the case of $p=1$.

Recall that $K(|\Cdot|)=\f {V_d} {(2\pi)^d}\,j_{d/2}(|\Cdot|)$,
$f_{*}(0)=\mathcal{N}(\RR^d)_1$, and $\|f_{*}\|_{L^{1}(\mathbb{R}^{d})}=1$. If
$F$ is extremal then $\|K-F\|_{L^{\infty}(\mathbb{R})}=f_{*}(0)$. By
\eqref{f-K} and \eqref{f-f*}, we have
\[
\int_{\R^{d}}(K(|x|)-F(|x|))f_\ast(|x|)\,dx=
f_\ast(0)-\int_{\R^{d}}F(|x|)f_\ast(|x|)\,dx=
f_\ast(0)
\]
On the other hand,
\begin{align*}
f_\ast(0)&=
\int_{\R^{d}}(K(|x|)-F(|x|))f_\ast(|x|)\,dx\le
\int_{\R^{d}}|K(|x|)-F(|x|)|\,|f_\ast(|x|)|\,dx\\
&\le
\|K-F\|_{L^{\infty}(\mathbb{R})}\|f_{*}\|_{L^{1}(\mathbb{R}^{d})}=
f_\ast(0).
\end{align*}
Thus, we have the sharp H\"older inequality for $(p,p')=(\infty,1)$  in the third step. It follows that
\[
K(t)-F(t)=f_\ast(0)\sign f_{*}(t)\quad \text{a.e. for $t\ge 0$}.
\]
Therefore, $F_{*}=K-f_\ast(0)\sign f_{*}$ is unique.

\section{Proof of Theorem \ref{thm-5-1} }\label{sec:5}



%
This section is devoted to the proof of Theorem \ref{thm-5-1}.

  Note first  that  the indentify,
    \begin{equation}\label{def-a*0-0}
    {}_{1}F_{2}\Bigl(\alpha+1;\alpha+2,\alpha+2;-\frac{q_{\al+1,1}^{2}}{4}\Bigr)=
    \frac{\int_{0}^{q_{\al+1,1}}j_{\alpha+1}(t)t^{2\alpha+1}\,dt}{\int_{0}^{q_{\al+1, 1}}t^{2\alpha+1}\,dt},
    \end{equation}
  follows directly from  \eqref{2-14-0}.  It remains to show that
  \begin{equation}\label{5-2-19}
  \mathcal{I}_\al = a_0^\ast:=\frac{\int_{0}^{q_{\al+1,1}}j_{\alpha+1}(t)t^{2\alpha+1}\,dt}{\int_{0}^{q_{\al+1,1}}t^{2\alpha+1}\,dt},
  \end{equation}
  where $\mathcal{I}_\al$ is defined in \eqref{dual1}.

    For simplicity,  we write
    $r_0=q_0=0$, $q_k=q_{\al+1,k}$,  and $r_k=r_{\al+1,k}=\f {q_k}{q_1}$  for $k=1,2,\cdots$.   Recall that $\{j_\al (q_k t)\}_{k=0}^\infty$ forms an orthogonal basis of the space $L^2([0,1], t^{2\al+1}\, dt)$.  In particular, we have
   \begin{equation}\label{orthogonailty}
   0=q_1^{2\al+2}\int_0^1 j_{\al} (q_k t) t^{2\al+1}\, dt =\int_0^{q_1} j_{\al} (r_k t) t^{2\al+1} \, dt,\quad k=1,2,\cdots.
   \end{equation}

  The crucial step  in our proof is to construct  an extremal function  $F^\ast\in X_\al$ with the following properties:
  \begin{equation}\label{5-4-0}
  \sup_{t>0} |j_{\al+1}(t)-F^\ast (t)|\leq a^\ast_0:=
  \frac{\int_{0}^{q_{1}}j_{\alpha+1}(t)t^{2\alpha+1}\,dt}
  {\int_{0}^{q_{1}}t^{2\alpha+1}\,dt},
  \end{equation}
  and
  \begin{equation}\label{5-5-0}
  j_{\al+1} (t) -F^\ast(t)\equiv a_0^\ast,\quad \text{for a.e. $t\in [0, q_1]$}.
  \end{equation}

  For the moment, we  assume that there exists   an extremal function $F^\ast\in X_\al$ satisfying  \eqref{5-4-0} and \eqref{5-5-0},
  and proceed with the proof of \eqref{5-2-19}. Indeed,  by   \eqref{def-a*0-0} and \eqref{5-4-0}, we have
$$\mathcal{I}_\al \leq a_0^\ast= {}_{1}F_{2}\Bigl(\alpha+1;\alpha+2,\alpha+2;-\frac{q_{\al+1,1}^{2}}{4}\Bigr).$$
Thus, if $\mathcal{I}_\al\neq a_0^\ast$, then  $\mathcal{I}_\al< a_0^\ast$, and   there exists a function $F\in X_\al$ such that
$$\|j_{\al+1} -F\|_\infty < a_0^\ast.$$
However, by \eqref{5-5-0}, this implies that
$$ j_{\al+1} (t) -F(t) < j_{\al+1}(t) -F^\ast (t)\quad \text{for a.e. $t\in [0, q_1]$},$$
or equivalently,
$$ F^\ast (t) -F(t)<0\quad \text{for a.e. $t\in [0, q_1]$}.$$
Integrating this last inequality with respect to the measure $t^{2\al+1} dt$ on  $[0, q_1]$, we obtain
$$\int_0^{q_1}( F^\ast (t) -F(t)) t^{2\al+1}\, dt <0,$$
which is impossible since by \eqref{orthogonailty}, we have
$\int_0^{q_1} F(t) t^{2\al+1}\, dt =0$ for every $F\in X_\al$.
Thus, one must have  $\mathcal{I}_\al=a_0^\ast$.

It remains to prove the existence of  an extremal function $F^\ast \in X_\al$ satisfying \eqref{5-4-0} and \eqref{5-5-0}.  Firstly, the condition \eqref{5-5-0} suggests us to consider the Fourier-Bessel series of the function $j_{\al+1}(t)$ with respect to the orthogonal basis $\{ j_{\al} (q_k t/q_1)\}_{k=0}^\infty$ of the space $L^2([0, q_1], t^{2\al+1}dt)$. Indeed, by \eqref{bessj-sum}, we have
  \begin{align}\label{5-7-a}
j_{\alpha+1}(t)=
a^\ast_{0}+\sum_{k=1}^{\infty}a_{k}^\ast j_{\alpha}(r_{k}t),\quad t\in
[0,q_{1}],
\end{align}
where
\[
a^\ast_{0}=
\frac{\int_{0}^{q_{1}}j_{\alpha+1}(t)t^{2\alpha+1}\,dt}
{\int_{0}^{q_{1}}t^{2\alpha+1}\,dt},\quad a_k^\ast=\f 2{(j_\al(q_k))^2} \int_0^1 j_{\al+1} (q_1 t) j_\al(q_k t) t^{2\al+1}\, dt,\quad k\ge 1.
\]
Using \eqref{bess-deriv} , we have
\begin{align*}
 a_k^\ast&=\f {4(\al+1)}{(j_\al(q_k))^2} q_1^{-2\al-2}\int_0^1  j_\al(q_k t) t^{-1} \int_0^{q_1t}  x^{2\al+1}j_\al(x ) \, dx dt\\
 &=\f {4(\al+1)}{(j_\al(q_k))^2} \int_0^1 x^{2\al+1}\int_0^1  j_\al(q_k t) j_\al(q_1 x t) t^{2\al+1} dt\, dx,\quad k\ge 1.
\end{align*}
It then follows by
  \eqref{2-13-0} that
\begin{align}
a^\ast_{k}=&-\frac{2}{j_{\alpha}(q_{k})}
\int_{0}^{1}\frac{j_{\alpha+1}(q_{1}x)x^{2\alpha+3}}{r_{k}^{2}-x^{2}}\,dx,\quad k=1,2,\cdots.\label{ak-def}
\end{align}
Note that $j_{\al+1}(t)>0$ for $t\in [0, q_1)$ and $r_k\ge 1$ for $k\ge 1$. Thus, \eqref{ak-def} together with \eqref{8-10-0} implies that
\begin{equation}\label{5-10-1}
j_\al (q_k) a_k^\ast <0,\quad (-1)^{k+1} a_k^\ast >0,\quad k=1,2,\cdots.
\end{equation}

  Secondly, a straightforward calculation shows that for any $t>0$,
  \begin{equation}\label{5-11}
  |a_k^\ast  j_\al (r_k t)|\leq \f C {k^2} \Bl( \f k {1+kt}\Br)^{\al+\f12},\ \  \text{as $k\to\infty$}.
  \end{equation}
  This in particular implies that \eqref{5-7-a} holds pointwisely  for every $t\in [0, q_1]$. Thus,  by \eqref{5-10-1}, we have
  \begin{equation}\label{a0-sum}
  0=j_{\alpha+1}(q_{1})=a^\ast_{0}+\sum_{k=1}^{\infty}a_{k}^\ast j_{\alpha}(q_{k})=
  a^\ast_{0}-\sum_{k=1}^{\infty}|a_{k}^\ast j_{\alpha}(q_{k})|.
  \end{equation}
  Note also that  \eqref{5-11} implies that  the series in  \eqref{5-7-a}   converges uniformly   on every compact subset of
  $(0,\infty)$ to a function $F^\ast\in L^\infty[0,\infty)$. Thus, we may use the infinite series on the right hand side of \eqref{5-7-a} to
  define a function $F^\ast$ on $\RR_+$  as follows:
  $$ F^\ast(t):=\begin{cases}
  \sum_{k=1}^{\infty}a_{k}^\ast j_{\alpha}(r_{k}t),&\ \ \text{if $t>0$};\\
1-  a^\ast_0, &\  \ \text{if $t=0$}.
  \end{cases}$$
Note that by \eqref{5-7-a},
  \[
  F^\ast(t) +a_0^\ast =j_{\al+1}(t),\quad \forall\,t\in [0, q_1].
  \]
     This together with the uniform convergence of the series on compact subsets of $(0,\infty)$   implies that $F^\ast$ is a  uniformly bounded  continuous function  on $[0,\infty)$.

  %

  Finally, to complete the proof, it remains to verify that
  \begin{equation}\label{j-F-a0}
  |j_{\alpha+1}(t)-F^\ast (t)|\le a^\ast_{0},\quad \forall\,t\ge q_1.
  \end{equation}
    Using \eqref{8-10-0} and \eqref{a0-sum},   we obtain  that for $t\ge q_1$,
  \begin{align*}
  |j_{\alpha+1}(t)-F(t)|
  &\le |j_{\alpha+1}(t)-a^\ast_{1}j_{\alpha}(t)|
  +\sum_{k=2}^{\infty}|a^\ast_{k}j_{\alpha}(q_{k})|\\
  &=a^\ast_{0}-a^\ast_1 |j_\al (q_k)| +|j_{\alpha+1}(t)-a^\ast_{1}j_{\alpha}(t)|.
  \end{align*}
    Thus, for the proof of \eqref{j-F-a0}, it suffices to show that
  \begin{equation}\label{j-a1-j}
\max_{t\ge q_1}\Bl  |j_{\alpha}(t)-\f {j_{\alpha+1}(t)}{a^\ast_{1}}\Br|\le |j_{\alpha}(q_{1})|.
  \end{equation}

  The proof of \eqref{j-a1-j} relies on the following technical lemma, which can be seen as an extension of the property  $\max_{t\ge q_{\al+1,1}} |j_\al (t)|=|j_\al (q_1)|$:
   \begin{lem}\label{lem-6-1} For $\al\ge -0.272$, we have  \begin{equation}\label{5-16-b}
     \sup_{t\ge q_{\al+1,1}}   \Bl|j_{\alpha}(t)-u
   \,j_{\alpha+1}(t)\Br|\equiv  |j_{\alpha}(q_{\al+1, 1})|,\quad 0\leq u\leq \f {\al+2}{\al+1}.
     \end{equation}

   \end{lem}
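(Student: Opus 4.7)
My plan is to proceed in two stages: a convexity reduction in the parameter $u$, followed by a detailed analysis of the extremal case $u=(\alpha+2)/(\alpha+1)$. For the reduction I observe that for every fixed $t\ge q_{\alpha+1,1}$ the map
\[
u \longmapsto \bigl(j_{\alpha}(t)-u\,j_{\alpha+1}(t)\bigr)^{2}=j_{\alpha+1}(t)^{2} u^{2}-2 j_{\alpha}(t) j_{\alpha+1}(t)\,u+j_{\alpha}(t)^{2}
\]
is a nonnegative convex quadratic in $u$, hence attains its maximum over $[0,(\alpha+2)/(\alpha+1)]$ at one of the two endpoints. Swapping the two suprema and using $j_{\alpha+1}(q_{\alpha+1,1})=0$, the claim reduces to the two one-parameter estimates
\[
\sup_{t\ge q_{\alpha+1,1}}|j_{\alpha}(t)|\le|j_{\alpha}(q_{\alpha+1,1})|,\qquad \sup_{t\ge q_{\alpha+1,1}}|F(t)|\le|j_{\alpha}(q_{\alpha+1,1})|,
\]
where $F(z):=j_{\alpha}(z)-\tfrac{\alpha+2}{\alpha+1}\,j_{\alpha+1}(z)$. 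The first of these is precisely \eqref{8-10-0}, so all the real work will be in controlling $F$.

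For the second stage I will use the recurrence $j_{\alpha}(z)=j_{\alpha+1}(z)+\tfrac{z}{2(\alpha+1)}\,j'_{\alpha+1}(z)$ from \eqref{2-6-00} to rewrite $(\alpha+1)F(z)=\tfrac{z}{2}\,j'_{\alpha+1}(z)-j_{\alpha+1}(z)$, which already gives $F(q_{\alpha+1,1})=j_{\alpha}(q_{\alpha+1,1})$. Differentiating and applying \eqref{bess-deriv} twice yields
\[
F'(z)=\frac{z}{2(\alpha+1)}\bigl[j_{\alpha+2}(z)-j_{\alpha+1}(z)\bigr]=-\frac{z^{2}}{4(\alpha+1)(\alpha+2)}\,j'_{\alpha+2}(z),
\]
so the critical points of $F$ on $(q_{\alpha+1,1},\infty)$ are exactly the positive zeros $\{q_{\alpha+3,k}\}_{k\ge 1}$ of $j_{\alpha+3}$. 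At any such $z_{0}$ the identity $j_{\alpha+1}=j_{\alpha+2}+\tfrac{z}{2(\alpha+2)}\,j'_{\alpha+2}$ forces $j_{\alpha+1}(z_{0})=j_{\alpha+2}(z_{0})$, and a short manipulation then produces the closed form
\[
F(z_{0})=-\frac{z_{0}^{2}+4(\alpha+2)}{4(\alpha+1)(\alpha+2)}\,j_{\alpha+2}(z_{0}).
\]
Because the local maxima of $|F|$ on $(q_{\alpha+1,1},\infty)$ sit exactly at those critical points, the sought bound $|F(z)|\le|j_{\alpha}(q_{\alpha+1,1})|$ reduces to the family of inequalities
\[
\bigl(q_{\alpha+3,k}^{2}+4(\alpha+2)\bigr)\,|j_{\alpha+2}(q_{\alpha+3,k})|\le q_{\alpha+1,1}^{2}\,|j_{\alpha+2}(q_{\alpha+1,1})|,\qquad k\ge 1,
\]
where I have used $|j_{\alpha}(q_{\alpha+1,1})|=\tfrac{q_{\alpha+1,1}^{2}}{4(\alpha+1)(\alpha+2)}\,|j_{\alpha+2}(q_{\alpha+1,1})|$, obtained by equating the two expressions for $j'_{\alpha+1}(q_{\alpha+1,1})$ supplied by \eqref{bess-deriv} and \eqref{2-6-00}.

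The cases $k\ge 2$ should not be difficult: the Sonin-type monotonicity of $j_{\alpha+2}(z)^{2}+(j'_{\alpha+2}(z))^{2}$ (available since $\alpha+2>-1/2$, established in the same way as \eqref{8-10-0}) gives $|j_{\alpha+2}(q_{\alpha+3,k})|\le|j_{\alpha+2}(q_{\alpha+3,1})|$, and then the uniform decay estimate \eqref{2-4} combined with the asymptotics $q_{\alpha+3,k}\sim \pi k$ easily absorbs the quadratic growth of $q_{\alpha+3,k}^{2}$. The main obstacle, and the source of the threshold $\alpha\ge -0.272$, will be the single sharp inequality at the first critical point $z_{1}=q_{\alpha+3,1}$. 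To handle it I will invoke the two-sided bounds \eqref{2-9-2} on $q_{\alpha+1,1}$ and $q_{\alpha+3,1}$ together with either the Weierstrass product \eqref{2-10-19} or the cosine integral representation of $j_{\alpha+2}$, reducing the $k=1$ case to an explicit one-variable transcendental inequality in $\alpha$ whose validity is precisely equivalent to $\alpha\ge -0.272$; the delicate task will be to carry all constants with enough precision to certify this critical value.
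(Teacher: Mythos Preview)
Your convexity reduction in Stage~1 is correct and is in fact cleaner than the paper's argument: the paper instead shows that the two-variable function $F(t,u)=j_\alpha(t)-u\,j_{\alpha+1}(t)$ has no critical point in $\{t>q_{\alpha+1,1},\,u>0\}$, so the maximum of $|F|$ over $D_u=[q_{\alpha+1,1},\infty)\times[0,u]$ is attained on $\partial D_u$, whence $M(u):=\sup_{t\ge q_{\alpha+1,1}}|F(t,u)|$ is nondecreasing. Your pointwise convexity in $u$ reaches the same endpoint reduction with less work. Your computation of $F'$, the identification of the critical points with $\{q_{\alpha+3,k}\}$, and the closed formulas for $F(q_{\alpha+1,1})$ and $F(q_{\alpha+3,k})$ also match the paper.

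The genuine gap is your treatment of the cases $k\ge 2$. Sonin monotonicity for $j_{\alpha+2}$ only tells you that $|j_{\alpha+2}(q_{\alpha+3,k})|$ is decreasing in $k$; it says nothing about the product $(q_{\alpha+3,k}^{2}+4(\alpha+2))\,|j_{\alpha+2}(q_{\alpha+3,k})|$, whose first factor grows like $k^{2}$. Invoking the decay bound \eqref{2-4} only shows this product tends to zero as $k\to\infty$, with an unspecified constant; it does \emph{not} rule out a bump at some intermediate $k$ exceeding the $k=1$ value (and hence the right-hand side), and the range of ``bad'' $k$ to be checked would depend on $\alpha$ without any uniform control. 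So as written the argument does not close.

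The paper fills exactly this gap by working with $f:=F$ itself rather than with $j_{\alpha+2}$. From \eqref{bess-deriv}, \eqref{2-6-00}, and \eqref{f'} one derives a second-order linear ODE
\[
A_{2}f''+A_{1}f'+A_{0}f=0,\qquad A_{0}=t^{3},\quad A_{1}=(2\alpha+1)t^{2}+4(\alpha+2)(2\alpha+3),\quad A_{2}=t(t^{2}+4(\alpha+2)),
\]
and then checks that $\varphi:=f^{2}+\dfrac{A_{2}}{A_{0}}\,(f')^{2}$ satisfies $\varphi'=\Bigl(\bigl(\tfrac{A_{2}}{A_{0}}\bigr)'-2\tfrac{A_{1}}{A_{0}}\Bigr)(f')^{2}<0$ on $(0,\infty)$. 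Since $\varphi(q_{\alpha+3,k})=f(q_{\alpha+3,k})^{2}$, this gives $|f(q_{\alpha+3,k})|$ strictly decreasing in $k$, reducing everything to $k=1$ in one stroke. This Sonin-type trick applied to $f$ (not to $j_{\alpha+2}$) is the missing idea.

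For $k=1$ your plan is in the right direction but vague. The paper carries it out in two pieces: for $-0.272\le\alpha\le 0.575$ it verifies $\rho(\alpha):=|f(q_{\alpha+3,1})|/|f(q_{\alpha+1,1})|\le 1$ numerically (this is where the threshold $-0.272$ arises), and for $\alpha>0.575$ it uses a second auxiliary function $\psi(t)=y^{2}+A(t)^{-1}(y')^{2}$ with $y(t)=t^{1/2}J_{\alpha+2}(t)$ together with the zero bounds \eqref{q1-bounds-0} to produce an explicit majorant $\tilde\rho(\alpha)<1$.
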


 The proof of Lemma \ref{lem-6-1} is very technical, and will be given in the next subsection.
   For the moment, we take this lemma for granted and proceed with the proof of \eqref{j-a1-j}.

  By Lemma \ref{lem-6-1}, it is enough to show
  \begin{equation}\label{5-16-1}
  \f 1 {a_1^\ast}\leq \f {\al+2}{\al+1}.
  \end{equation}
  To this end,  we define   $g(x):=\frac{j_{\alpha+1}(q_{1}x)}{1-x^{2}}$ for $x\in [0,1)$.  Then by \eqref{2-10-19},
  \[
  g(x)=
  \prod_{k=2}^{\infty}\Bigr(1-\frac{x^{2}}{r_{k}^{2}}
  \Bigr),\quad x\in [0,1).
  \]
  Since $r_k\ge 1$ for all $k\ge 1$, it is easily seen  that  $g(x)$ is a strictly  decreasing  function on $ [0,1)$.
  Thus, for any $x\in [0,1)$,
  \[
  \frac{j_{\alpha+1}(q_{1}x)}{1-x^{2}}=g(x)>\lim_{x\to 1-} g(x) =\frac{q_{1}j_{\alpha+1}^{\,\prime}(q_{1})}{-2}=
  -(\alpha+1)j_{\alpha}(q_{1}),
  \]
  where the last step uses \eqref{2-6-00}.
  It then follows from   \eqref{ak-def} that
  \[
  a^\ast_{1}=-\frac{2}{j_{\alpha}(q_{1})}
  \int_{0}^{1}\frac{j_{\alpha+1}(q_{1}x)
    x^{2\alpha+3}}{1-x^{2}}\,dx
  >2(\alpha+1)\int_{0}^{1}x^{2\alpha+3}\,dx=\frac{\alpha+1}{\alpha+2},
  \]
    which  proves \eqref{5-16-1}.

\subsection{Proof of Lemma  \ref{lem-6-1}}
Write as usual $q_1:=q_{\al+1,1}$.
We first  claim that for the proof of  Lemma  \ref{lem-6-1},
it is enough to show  \eqref{5-16-b}  for $u=\f {\al+2}{\al+1}$, or equivalently,
\begin{equation}\label{5-18}
\sup_{t\ge q_1} \Bl|j_\al (t)-\f {\al+2}{\al+1}\,j_{\al+1}(t)\Br|=|j_\al (q_1)|.
\end{equation}
To see this,   consider the function
\[
F(t,u):=j_{\alpha}(t)-uj_{\alpha+1}(t),\quad t\ge q_1,\quad  u\ge 0.
\]
 Note that for $t> q_1$ and $u>0$,
\begin{align*}
\nabla F(t,u)& =0\iff \begin{cases}
j'_{\alpha}(t)-uj'_{\alpha+1}(t)=0\\
j_{\al+1} (t)=0
\end{cases}  \iff j_{\al+1}(t)=j_{\al+2}(t)=0,
\end{align*}
which is impossible since  $j_{\al+1}$ and $j_{\al+2}$ do not have common positive zeros.
This means that  $F$ does not have any critical points in the domain $\{(t, u):\  \ t>q_1,\  \ u>0\}$.  On the other hand,  however,  for any $u>0$,
$$\lim_{t\to \infty} \max_{0\leq v\leq u} |F(t,v)|=0.$$
Thus,  for any $u>0$,
 $|F|$ has   a maximum on the domain $D_u:=\{ (t, v):\  \ t\ge q_1,\  \ 0\leq v\leq u\}$  which is achieved  on  its boundary $\partial D_u$.  Since  by  \eqref{8-10-0},
 $$ \sup_{t\ge q_1} |F(t,0)|=\sup_{0\leq v\leq u} |F(q_1,v)|=|j_\al(q_1)|,\quad u>0,$$
 it follows that
 \begin{align*}
 \max_{(t, v)\in D_u} |F(t,v)|= \max_{(t,v)\in \partial D_u} |F(t, v)|=\max_{t\ge q_1}|F(t, u)|=: M(u),\quad u>0.
 \end{align*}
 Since $D_{u_1}\subset D_{u_2}$ for $0\leq u_1<u_2$, this implies that the function $M(u):=\max_{t\ge q_1}|F(t, u)|$ is increasing on $[0, \infty)$.  The claim then follows as $M(u)\ge |F(q_1, u)|=|j_\al(q_1)|$ for any $u\ge 0$.

It remains to show \eqref{5-18}.  Define
\[
f(t):=j_{\alpha}(t)-\frac{\alpha+2}{\alpha+1}\,j_{\alpha+1}(t),\quad t\ge 0.
\]
We need to prove that
\[
\max_{t\ge q_1} |f(t)|=|f(q_1)|=|j_\al(q_1)|.
\]

Using \eqref{2-6-00} and \eqref{bess-deriv}, we obtain
\begin{equation}\label{f'}
f'(t)=-\frac{tj_{\alpha+1}(t)}{2(\alpha+1)}+\frac{tj_{\alpha+2}(t)}{2(\alpha+1)}=
\frac{t^{3}j_{\alpha+3}(t)}{2^{3}(\alpha+1)(\alpha+2)(\alpha+3)}.
\end{equation}
This in particular implies that the local extrema of $f$ on $(0,\infty)$ can  only be attained at positive zeros  of $j_{\al+3}$  (i.e.,  at the points $q_{\al+3,1}$, $q_{\al+3,2}$, $\cdots$).  We claim that
\begin{equation}\label{5-22}
|f(q_{\al+3, k})|\ge |f(q_{\al+3,k+1 })|,\quad k=1,2,\cdots,
\end{equation}
which will imply
\[
\max_{t\ge q_{1}} |f(t)|= \max \Bl\{ |f(q_{1})|, |f(q_{\al+3, 1})|\Br\}.
\]

To show \eqref{5-22}, we need a differential equation for the function $f$.  Indeed, using  \eqref{bess-deriv},  \eqref{f'}, and the formula
$$j_{\alpha}''(t)=-\frac{2\alpha+1}{t}\,j_{\alpha}'(t)-j_{\alpha}(t),$$
we obtain
\begin{equation}\label{5-24}
f''(t)=\frac2t\,j_{\alpha}'(t)-\frac{2\alpha+3}{t}\,f'(t)-f(t).
\end{equation}
Furthermore, using  \eqref{f'} and \eqref{2-6-00},  we can  write $f'$ in the form
\begin{equation}\label{5-25}
f'(t)=
\Bigl(\frac{2(\alpha+2)}{t}-\frac{t}{2(\alpha+1)}\Bigr)j_{\alpha+1}(t)-
\frac{2(\alpha+2)}{t}\,j_{\alpha}(t).
\end{equation}
Now combining \eqref{5-24} with \eqref{5-25},  we deduce via  a~straightforward calculation that
\begin{equation}\label{f-ode}
A_{2}f''+A_{2}f'+A_{0}f=0,
\end{equation}
where
\[
A_{0}=t^{3},\quad A_{1}=(2\alpha+1)t^2+4(\alpha+2)(2\alpha+3),\quad
A_{2}=t(t^2+4(\alpha+2)).
\]

Now let us consider the function $\varphi:=f^{2}+\frac{A_{2}}{A_{0}}\,f'^{2}$.
Using \eqref{f-ode}, and by a~straightforward calculation, we obtain that for $t>0$,
\begin{align*}
\varphi'&=2f'\Bigl(f+\frac{A_{2}}{A_{0}}\,f''\Bigr)+\Bigl(\frac{A_{2}}{A_{0}}\Bigr)'f'^{2}=
\Bigl(\Bigl(\frac{A_{2}}{A_{0}}\Bigr)'-2\,\frac{A_{1}}{A_{0}}\Bigr)f'^{2}\\
&=-\frac{2((2\alpha+1)t^2+8(\alpha+2)^2)}{t^3}\,f'^{2}<0.
\end{align*}
Thus,  $\varphi$ is a decreasing function on $[0,\infty)$.
The claim \eqref{5-22} then follows since $$\varphi(q_{\al+1,k})=f^{2}(q_{\al+1,k}),\quad k=1,2,\cdots.$$

Thus,  to complete the proof of the lemma,  it suffices to show that for each $\alpha\ge -0.272$,  \begin{equation}\label{5-27}
|f(q_{1})|\ge
|f(q_{\al+3,1})|.
\end{equation}

For simplicity,  we write $q_1'=q_{\al+3,1}$.
Using \eqref{2-6-00}, and  by straightforward calculations, we obtain
\[
|f(q_{1})|=\frac{q_{1}^{2}j_{\alpha+2}(q_{1})}{4(\alpha+1)(\alpha+2)},\quad
|f(q_{1}')|=
-\frac{(q_{1}'^{2}+4(\alpha+2))j_{\alpha+2}(q_{1}')}{4(\alpha+1)(\alpha+2)}.
\]
Thus,
\begin{equation}\label{5-28-0}
\rho(\alpha):=\frac{|f(q_{1}')|}{|f(q_{1})|}=
-\frac{(q_{1}'^{2}+4(\alpha+2))j_{\alpha+2}(q_{1}')}{q_{1}^{2}j_{\alpha+2}(q_{1})}.
\end{equation}
To complete  the proof of \eqref{5-27}, it remains   to verify that  $\rho(\al) \leq 1$ for $\al\ge -0.272$.
We consider the following  two cases:   (i) $-0.272\leq \al\leq  0.575$,
(ii) $\al>0.575$.

For the first case, we use the fact that   $\rho(\alpha)$ as given in \eqref{5-28-0}  is an analytic function of $\alpha\ge -\frac12$.
Thus, for $\alpha\le 0.575$ we can use the very precise
approximation of $J_{\alpha}$ and $q_{\alpha,1}$  realized in  Maple to compute $\rho(\al)$.  Indeed, easy
numerical calculations shows  that $\alpha_{0}=-0.2729\cdots$ is a solution of the equation $\rho(\al)=1$, and the function $\rho(\alpha)$ is decreasing on $(-\f 12, 0.575]$,  and  $\rho(\alpha)\leq 1$  whenever $\al \in [\al_0, 0.575]$.

To estimate $\rho(\al)$ for the second case, we set $y(t):=t^{\frac12}J_{\alpha+2}(t)$, and express  $\rho(\al)$  as \[
\rho(\alpha)=
-\Bigl(1+\frac{4(\alpha+2)}{q_{1}'^{2}}\Bigr)
\Bigl(\frac{q_{1}}{q_{1}'}\Bigr)^{\alpha+\frac12}\,\frac{y(q_{1}')}{y(q_{1})}.
\]
As is well known, the function $y$ satisfies the differential equation
\begin{equation}\label{5-28}
y''+A(t)y=0\quad \text{with $A(t)=1-\frac{(\alpha+2)^{2}-1/4}{t^{2}}$}.
\end{equation}
Note that by \eqref{2-9-2}, $q_{1}>((\alpha+2)^{2}-1/4)^{\frac12}$. Thus, we have
$$ A(t)>0,\quad A'(t)>0,\quad \forall\,t\ge q_1.$$
As in  the proof of the claim \eqref{5-22}, the differential equation \eqref{5-28} allows us to construct  a  decreasing function on $[q_1, \infty)$. Indeed, let $$\psi(t):=y^{2}+\f 1 {A(t)} \Bl( \f {dy}{dt}\Br)^2.$$
Then
$$\psi'(t)=-\f {A'(t)}{A(t)^2} \Bl( \f {dy}{dt}\Br)^{2}<0,\quad t\ge q_1.$$   Since $q_1=q_{\al+1,1}<q_1'=q_{\al+3,1}$, it follows that  \begin{equation}\label{5-30-0}
\psi(q_{1}')<\psi(q_{1}).
\end{equation}
However, using the relations
$J_{\alpha+2}'(q_{1}')=\frac{\alpha+2}{q_{1}'}\,J_{\alpha+2}(q_{1}')$ and
$J_{\alpha+2}'(q_{1})=-\frac{\alpha+2}{q_{1}}\,J_{\alpha+2}(q_{1})$,  we obtain
\[
\psi(q_{1}')=y^{2}(q_{1}')\Bigl(1+\frac{(\alpha+\frac52)^{2}}
{q_{1}'^{2}-(\alpha+2)^{2}+\frac14}\Bigr),\quad
\psi(q_{1})=y^{2}(q_{1})\Bigl(1+\frac{(\alpha+\frac32)^{2}}
{q_{1}^{2}-(\alpha+2)^{2}+\frac14}\Bigr).
\]
Thus, using \eqref{5-30-0}, we can estimate the function $\rho(\al)$ as follows:
\begin{equation}\label{5-30}
\rho(\alpha)<\tilde{\rho}(\alpha):=\Bigl(1+\frac{4(\alpha+2)}{q_{1}'^{2}}\Bigr)
\Bigl(\frac{1+\frac{(\alpha+\frac32)^{2}}{q_{1}^{2}-(\alpha+2)^{2}+\frac14}}
{1+\frac{(\alpha+\frac52)^{2}}{q_{1}'^{2}-(\alpha+2)^{2}+\frac14}}\Bigr)^{\frac12}
\Bigl(\frac{q_{1}}{q_{1}'}\Bigr)^{\alpha+\frac12}.
\end{equation}
We then reduce to showing that $\tilde{\rho}(\al) \leq 1$ for $\al\ge 0.575$.  To this end, we use  the following uniform estimates on the first positive zeros of Bessel functions ( see
\cite{QW99}):
\begin{equation}\label{q1-bounds-0}
\alpha+c_{1}\alpha^{1/3}<q_{\alpha,1}<
\alpha+c_{1}\alpha^{1/3}+c_{2}\alpha^{-1/3},\quad \forall\,\al>0,
\end{equation}
where $c_{1}=1.855\cdots$ and $c_{2}=1.033\cdots$.
Substituting the  bounds \eqref{q1-bounds-0} into  the expression of  $\tilde{\rho}(\alpha)$ in \eqref{5-30},  one can easily verify via simple numerical calculations that $\tilde{\rho}(\alpha)<1$ for any $\alpha\ge 0.575$.
This completes the proof.

\section{Proofs of Theorem  \ref{thm-main-as} and Corollary \ref{cor-1-1}}\label{sec:6}
\begin{proof}[Proofs of Theorem  \ref{thm-main-as}]

The lower estimate in Theorem \ref{thm-main-as} follows directly from Theorem \ref{thm-1-2}, while the upper estimate in Theorem \ref{thm-main-as}  is an easy consequence of Corollary \ref{cor-L*} and Theorem \ref{thm-5-1}.
\end{proof}

\begin{proof}[ Proof of Corollary \ref{cor-1-1}]

By Theorem \ref{thm-main-as},  it suffices to show that
  \begin{equation}\label{6-1-0}
    a_{0}^{*}:=\frac{\int_{0}^{q_{1}}j_{\alpha+1}(t)t^{2\alpha+1}\,dt}
  {\int_{0}^{q_{1}}t^{2\alpha+1}\,dt}=
  \Bigl(\frac2e\Bigr)^{\alpha(1+O(\alpha^{-2/3}))},\quad \al\to\infty,
  \end{equation}
where $q_1=q_{\al+1,1}$.

To show \eqref{6-1-0},  we  use the following known  formula (see \cite[7.14.1 (7)]{BE53}):
  \begin{equation}\label{6-2-0}
    \int_{0}^{z}t^{2\alpha+1}j_{\alpha+1}(t)\,dt=
  2\alpha z^{\alpha+2}j_{\alpha+1}(z)S_{\alpha-1,\alpha}(z)-
  (2\alpha+2)z^{\alpha+1}j_{\alpha}(z)S_{\alpha,\alpha+1}(z),
  \end{equation}
  where $S_{\mu,\nu}(z)$ denotes the Lommel function.  We then obtain
  \[
  a_{0}^{*}=
  -(2\alpha+2)^{2}q_{1}^{-\alpha-1}j_{\alpha}(q_{1})S_{\alpha,\alpha+1}(q_{1}).
  \]
Note that by \eqref{2-9-2},
  $\alpha+1<q_{1}=\alpha+1+O((\alpha+1)^{1/3})$ as $\al\to\infty$.

  We will also  use the following estimate of the Lommel function:
  \begin{equation}\label{bessj-S-as}
  S_{\alpha,\alpha+1}(z)=z^{\alpha-1}(1+O(z^{-1})) \quad \text{uniformly for $z>\alpha$},\quad \text{as $\alpha\to \infty$}.
  \end{equation}
 Since we are unable to find this estimate in
  literature, we decide to include a proof here.  Indeed, using \cite[Th.~1.1]{Ne15}, we have that for an integer $N>\al$,
  \begin{equation}\label{bessj-S}
  S_{\alpha,\alpha+1}(z)=z^{\alpha-1}\Bigl (\sum_{k=0}^{N-1}
  \frac{\prod_{v=1}^{k}(\alpha-v+1)v}{(z/2)^{2k}}+r_{N}(z)\Bigr),\quad z>0.
  \end{equation}
Here $r_N(z)\equiv 0$  if $\alpha$ is a nonnegtive
  integer, and $r_N(z)$ can be estimated by an integral of the
  Macdonald function otherwise:
  \begin{equation}\label{6-4}
    |r_{N}(z)|\le \frac{2^{\alpha+1}z^{-2N}}{\Gamma(-\alpha)}
  \int_{0}^{\infty}t^{2N-\alpha}K_{\alpha+1}(t)\,dt=
  \frac{\Gamma(N+1)\Gamma(N-\alpha)}{\Gamma(-\alpha)(z/2)^{2N}}.
    \end{equation}
  Letting $\alpha\to \infty$ and setting $N=[\alpha+2]$ in \eqref{6-4},  we obtain that for $z>\alpha$,
  \[
  r_{N}(z)=O((2/e)^{2\alpha}(z/2)^{-2(N-\alpha)})=O(z^{-1}).
  \]
  Since each fraction in \eqref{bessj-S} is decreasing in $k\ge 1$,
  \eqref{bessj-S-as} then follows.

  Now using \eqref{6-2-0} and \eqref{bessj-S-as}, we obtain that
  for $\alpha\to \infty$
  \begin{align*}
  a_{0}^{*}&= -\frac{(2\alpha+2)^{2}}{q_{1}^{2}}\,j_{\alpha}(q_{1})(1+O(q_{1}^{-1}))\notag\\
  &\sim
  -4j_{\alpha}(q_{1})=
  -\frac{2^{\alpha+2}\Gamma(\alpha+1)}{q_{1}^{\alpha}}\,J_{\alpha+1}'(q_{1}),
  \end{align*}
  where the last step uses  the formula $J_{\alpha}(q_{1})=J_{\alpha+1}'(q_{1})$.  On the other hand, however, according to
  \cite[Eq.~10.19.12]{OLBC10}, we have
  \[
  -J_{\alpha+1}'(q_{1})=-J_{\alpha+1}'(\alpha+1+O((\alpha+1)^{1/3}))=
  c\alpha^{-2/3}+O(\alpha^{-4/3}),
  \]
  where  $c$ is a positive constant independent of $\al$ that can be  expressed explicitly  in terms of the Airy
  function.
  Thus, using Stirling's formula $\Gamma(\alpha+1)\sim
  (2\pi\alpha)^{1/2}(\alpha/e)^{\alpha}$, we obtain
  \[
  a_{0}^{*}\sim
  C\alpha^{-1/6}(2\alpha/e)^{\alpha}(\alpha+O(\alpha^{1/3}))^{-\alpha}=
  (2/e)^{\alpha(1+O(\alpha^{-2/3}))}\quad \text{as}\ \alpha\to \infty.
  \]
  This proves \eqref{6-1-0} and hence completes the proof of the corollary.
\end{proof}

\section{Applications in the  Remez-type problem}\label{sec-remez}

{ In this section,  we give an application of our results on Nikolskii constants in a Remez type problem, which appears frequently  in
approximation theory and number theory.

Consider a Lebesgue-measurable set   $E\subset \mathbb{R}^{d}$ for which there exists function  $f\in
\mathcal{E}_{1}^{d}\setminus \{0\}$ such that
\begin{equation}\label{7-1}
\int_{E}|f(x)|\,dx\ge \frac12\int_{\mathbb{R}^{d}}|f(x)|\,dx.
\end{equation}

We define the Remez constant  $\al_d^\ast$ to be the infimum of the Lebesgue measure $|E|$ over all measurable $E\subset \R^d$ with the above mentioned property.
In the case of $d=1$, the exact value of the constant $\al_d^\ast$ was founded in   \cite{MR14}, where  it  was proved that
$\alpha_{1}^{*}=\pi$ and the corresponding extremal function
is $\frac{\cos x}{1-(2x/\pi
)^{2}}$.  The exact  value of $\al_d^\ast$ for $d\ge 2$ remains unknown. Note that the Remez constant plays an important role in $L_1$-approximation of  functions with small support and, in particular, in the study of sparse representations (compressed sensing).

Using our results on Nikolskii constants, we may give an asymptotic estimate of the constant $\al_d^\ast$ as $d\to \infty$. To be precise, we first recall that
$\mathcal{N}(\RR^d)_{1}=\mathcal{N}(\RR^d)_{1,\infty}=
\frac{V_{d}}{(2\pi)^{d}}\,\mathcal{L}^\ast (d)$, the constant  $\mathcal{I}_{\alpha}$ is given in  \eqref{5-2-0},
$V_{d}=\frac{\pi^{d/2}}{\Gamma(d/2+1)}=\text{Vol} (B_d)$, and
$q_{\alpha,1}$ is the first positive zero of the Bessel function $J_\al$.
We will need the  following known result:

\begin{thm}[\cite{Go00}]\label{thm-7-1}
For $d\ge 1$, we have
\begin{align*}
r_{d}:&=\inf \Bl\{r>0:\   \  \text{$\exists $ $f\in\mathcal{E}_1^d\setminus \{0\}$ such that $\wh{f}(0)\ge 0$ and $f(x)\leq 0$ for all $|x|\ge r$}  \Br\}\\
&=2q_{\f d2-1,1}
\end{align*}
with the extremal function given by
\begin{equation}\label{fd}
f_{d}(x):=\frac{(j_{d/2-1}(|x|/2))^{2}}{1-(|x|/r_{d})^{2})}.
\end{equation}
\end{thm}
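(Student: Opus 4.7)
The plan is to prove the two matching inequalities $r_d\leq 2q_{d/2-1,1}$ (via the explicit construction $f_d$) and $r_d\geq 2q_{d/2-1,1}$ (via a Fourier--Bessel sampling argument); the second direction is the substantive part.

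\textbf{Upper bound.} First, I would verify that $f_d$ in \eqref{fd} is admissible. Since $j_{d/2-1}(z/2)$ is entire of exponential type $1/2$, its square has type $1$; the denominator $1-(|x|/r_d)^2$ has simple poles at $|x|=\pm r_d$, but because $q_{d/2-1,1}$ is a \emph{simple} zero of $j_{d/2-1}$ (cf.~\eqref{2-10-19}), the numerator has a double zero there, so $f_d$ extends to an entire radial function of exponential type $1$. The Bessel asymptotic \eqref{2-4} gives $|f_d(x)|=O(|x|^{-(d+1)})$, placing $f_d$ in $L^1(\R^d)$, hence in $\mathcal{E}_1^d$. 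For $|x|\geq r_d$ the numerator is nonnegative and the denominator nonpositive, so $f_d\leq 0$ there. The remaining condition $\widehat{f_d}(0)=\int f_d\geq 0$ (in fact $=0$) follows from the closed-form evaluation \eqref{6-2-0} in terms of the Lommel function, or equivalently a direct Hankel-transform computation.

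\textbf{Lower bound.} Given any admissible $f$, averaging over the orthogonal group preserves all the hypotheses, so I assume $f$ is radial. Suppose for contradiction that $r<2q_{d/2-1,1}$. The key ingredient is a Gauss--Hankel sampling identity for radial functions of exponential type $2$: for every radial $g\in\mathcal{E}_2^d\cap L^1(\R^d)$,
\[
\int_{\R^d} g(x)\,dx \;=\; w_0\,g(0)+\sum_{k=1}^\infty w_k\,g(2q_{d/2-1,k}),
\]
with strictly positive weights $w_k$, the nodes being the (doubled) positive zeros of $j_{d/2-1}$. Applied to $g(x):=f(x)\,[j_{d/2-1}(|x|/2)]^2$, which lies in $\mathcal{E}_2^d$ as a product of two type-$1$ functions, every term with $k\geq 1$ drops out because $[j_{d/2-1}(|x|/2)]^2$ vanishes to order two at $|x|=2q_{d/2-1,k}$, so the right-hand side collapses to $w_0\,f(0)$. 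The integrand $g$ is non-positive on $\{|x|\geq r\}$, so a comparison with the integral over $\{|x|<r\}$, combined with a Paley--Wiener analysis of $\widehat{f}$ and a complementary-slackness-type pairing against the extremal $f_d$, will force $f\equiv 0$, the desired contradiction.

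The main obstacle will be the lower bound, specifically justifying the Gauss--Hankel quadrature with strictly positive weights in the radial $\mathcal{E}_2^d$ setting and extracting a clean contradiction from the sign structure (a refined Hermite-type variant involving derivatives at the simple-zero node $|x|=r_d$ may be needed). If the direct quadrature route proves delicate, a linear-programming duality argument in the Cohn--Elkies spirit offers an alternative: take $f_d$ itself as the dual certificate and study the pairing $\int f\cdot h\,dx$ for $h$ built from $f_d$, so that the inequality becomes an equality precisely at $r=2q_{d/2-1,1}$.
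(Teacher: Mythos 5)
Your upper-bound half is essentially sound (entirety of $f_d$ via the double zero of the numerator over the simple zero of the denominator, type $1$, the $O(|x|^{-(d+1)})$ decay, and the sign pattern), although the evaluation $\wh{f_d}(0)\ge 0$ is not a consequence of \eqref{6-2-0}, which computes $\int_0^z t^{2\al+1}j_{\al+1}(t)\,dt$ and has nothing to do with $\int f_d$; this point still needs an argument. The serious problem is the lower bound: the ``Gauss--Hankel sampling identity'' you invoke, with nodes $\{0\}\cup\{2q_{d/2-1,k}\}_{k\ge1}$ and exactness for all radial $L^1$ functions of exponential type $2$, is false for any choice of weights. Already for $d=1$ (so $\al=-\f12$, $j_{-1/2}(t)=\cos t$, $2q_{-1/2,k}=(2k-1)\pi$) the even function $g(x)=\cos^2(x/2)\,\sin^4(x/4)/x^2$ is entire of type $2$, nonnegative, integrable, and vanishes at $0$ and at every odd multiple of $\pi$, yet $\int_{\R}g>0$. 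Conceptually, $\{2q_{\al,k}\}$ is the Gauss node set whose quadrature is exact only for type $\le 1$ (for type $2$ the nodes must be the undoubled zeros $q_{\al,k}$), and appending the single node $0$ cannot double the exactness class, just as adding one node to an $n$-point Gauss rule cannot double its polynomial degree of exactness. Consequently the collapse ``$\int g=w_0 f(0)$'' never happens for your $g=f\cdot[j_{d/2-1}(|\cdot|/2)]^2$, and the remaining step (``a comparison \dots will force $f\equiv0$'') is in any case only a gesture, not an argument; so the inequality $r_d\ge 2q_{d/2-1,1}$ is not established. (Also note a notational clash: in this paper $\mathcal{E}_2^d$ means type-$1$ functions in $L^2$, not type-$2$ functions.)

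The repair is to drop the multiplier $[j_{d/2-1}(|\cdot|/2)]^2$ altogether and use the correct quadrature at the correct exactness level, applied directly to $f$: for radial $g$ of exponential type $\le 2\tau$ with $g\in L^1(t^{2\al+1}dt)$ one has $\int_0^\infty g(t)t^{2\al+1}dt=\sum_{k\ge1}\mu_k\,g(q_{\al,k}/\tau)$ with strictly positive weights $\mu_k$ (Grozev--Rahman; Frappier--Olivier for $d=1$). Taking $\tau=\f12$, $\al=\f d2-1$, the nodes are exactly $2q_{\al,k}\ge 2q_{\al,1}$, and if $r<2q_{\al,1}$ then every node lies in the region where $f\le0$, whence $\wh f(0)=\int f\le 0$; combined with $\wh f(0)\ge0$ and $\mu_k>0$ this forces $f(2q_{\al,k})=0$ for all $k$. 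Rescaling the rule (nodes $\rho\,q_{\al,k}/q_{\al,1}$ for any $\rho\in(r,2q_{\al,1}]$, still exact since $f$ has type $1\le 2q_{\al,1}/\rho$) shows $f$ vanishes on a whole interval of radii, hence $f\equiv0$ by analyticity --- the desired contradiction; the same rule applied to $f_d$ also yields $\int f_d=0$, settling the upper-bound normalization. This quadrature route is, in substance, the proof in the cited source [Go00]; the present paper states Theorem 7.1 without proof, so your write-up would have to supply these ingredients (in particular the positivity of the weights), and as it stands the central lemma it relies on is not true.
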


As a consequence, we have

\begin{cor}\label{cor-remez}
For $d\ge 2$,
\begin{equation}\label{7-3-0}
\frac{(2\pi)^{d}}{2V_{d}\mathcal{I}_{d/2-1}}\le \alpha_{d}^{*}\le
(2q_{d/2-1,1})^{d}V_{d}.
\end{equation}

In particular, we have
\begin{equation}\label{7-3}
(\sqrt{e/2})^{1+o(1)}\le \Bl(\frac{V_{d}\alpha_{d}^{*}}{(2\pi)^{d}}\Br)^{1/d}\le
e^{1+o(1)}\quad \text{as $d\to \infty$}.
\end{equation}

\end{cor}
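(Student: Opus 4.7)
The plan is to establish the two inequalities in \eqref{7-3-0} by separate arguments, and then extract the asymptotic \eqref{7-3} by taking $d$-th roots and inserting Stirling's formula, the zero estimate \eqref{q1-bounds-0}, and the asymptotic of $\mathcal{I}_{d/2-1}$ already computed in the proof of Corollary~\ref{cor-1-1}.

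For the upper bound in \eqref{7-3-0}, I would use Gorbachev's extremal function $f_d$ from Theorem~\ref{thm-7-1} as a test function for the Remez problem. The explicit form \eqref{fd} shows that $f_d(x)\ge 0$ for $|x|\le r_d=2q_{d/2-1,1}$ and $f_d(x)\le 0$ for $|x|\ge r_d$ (the zero of the denominator at $|x|=r_d$ being cancelled by the double zero of $(j_{d/2-1}(|x|/2))^2$ there), while $\wh{f_d}(0)\ge 0$ means $\int_{\R^d}f_d(x)\,dx\ge 0$. Splitting the integral gives
\[
\int_{|x|\le r_d}|f_d(x)|\,dx-\int_{|x|\ge r_d}|f_d(x)|\,dx=\int_{\R^d}f_d(x)\,dx\ge 0,
\]
so the set $E=\overline{B(0,r_d)}$ satisfies \eqref{7-1}. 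Hence $\alpha_d^\ast\le |E|=V_d r_d^d=V_d(2q_{d/2-1,1})^d$.

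For the lower bound in \eqref{7-3-0}, I would invoke the Nikolskii inequality on $\R^d$ in the form $\|f\|_{L^\infty(\R^d)}\le \mathcal{N}(\R^d)_1\|f\|_{L^1(\R^d)}=\frac{V_d}{(2\pi)^d}\,\mathcal{L}^\ast(d)\,\|f\|_{L^1(\R^d)}$. For any pair $(f,E)$ with $f\in\mathcal{E}_1^d\setminus\{0\}$ satisfying \eqref{7-1}, one has
\[
\tfrac12\|f\|_{L^1(\R^d)}\le \int_E |f(x)|\,dx\le |E|\cdot \|f\|_{L^\infty(\R^d)}\le |E|\cdot \frac{V_d}{(2\pi)^d}\,\mathcal{L}^\ast(d)\,\|f\|_{L^1(\R^d)},
\]
so $|E|\ge \frac{(2\pi)^d}{2V_d\mathcal{L}^\ast(d)}$. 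The upper estimate $\mathcal{L}^\ast(d)\le \mathcal{I}_{d/2-1}$, which is exactly the content of Theorem~\ref{thm-main-as} combined with identity \eqref{5-2-0} specialized to $\alpha=d/2-1$, then yields the desired bound $\alpha_d^\ast\ge \frac{(2\pi)^d}{2V_d\mathcal{I}_{d/2-1}}$.

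For \eqref{7-3}, Stirling's formula gives $V_d^{2/d}=\frac{2\pi e}{d}(1+o(1))$, and the zero estimate \eqref{q1-bounds-0} applied at $\alpha=d/2-1$ gives $2q_{d/2-1,1}=d+O(d^{1/3})=d(1+O(d^{-2/3}))$, so taking $d$-th roots in the upper half of \eqref{7-3-0} yields
\[
\Bl(\frac{V_d\alpha_d^\ast}{(2\pi)^d}\Br)^{1/d}\le \frac{V_d^{2/d}\cdot 2q_{d/2-1,1}}{2\pi}=e(1+o(1))=e^{1+o(1)}.
\]
For the matching lower bound, the calculation \eqref{6-1-0} in the proof of Corollary~\ref{cor-1-1} (via the Lommel-function asymptotic \eqref{bessj-S-as}) supplies $\mathcal{I}_{d/2-1}=(2/e)^{(d/2-1)(1+O(d^{-2/3}))}$, and therefore $(2\mathcal{I}_{d/2-1})^{-1/d}=(\sqrt{e/2})^{1+o(1)}$. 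There is no substantive obstacle here: the corollary is essentially an assembly of Theorem~\ref{thm-7-1}, Theorems~\ref{thm-main-as} and~\ref{thm-5-1}, and the asymptotic bookkeeping developed in Section~\ref{sec:6}. The one place requiring genuine care is the direction of the inequality $\mathcal{L}^\ast(d)\le \mathcal{I}_{d/2-1}$ in the lower-bound argument, which must be applied with the correct orientation to preserve the direction of the Remez estimate.
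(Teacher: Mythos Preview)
Your proposal is correct and follows essentially the same route as the paper: the lower bound in \eqref{7-3-0} via the Nikolskii inequality and $\mathcal{L}^\ast(d)\le\mathcal{I}_{d/2-1}$, the upper bound via the test function $f_d$ from Theorem~\ref{thm-7-1}, and the asymptotics via Stirling together with \eqref{q1-bounds-0} and \eqref{6-1-0}. The only (harmless) difference is that the paper asserts $\int_{\R^d}f_d=0$ exactly, whereas you use only $\widehat{f_d}(0)\ge 0$; your weaker input is what Theorem~\ref{thm-7-1} literally states and already suffices for \eqref{7-1}.
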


Note that the lower estimate here  improves significantly the estimate
$\bl(\frac{V_{d}\alpha_{d}^{*}}{(2\pi)^{d}}\br)^{1/d}\ge 1$ given in
\cite{BKP12}.

\begin{proof}
  Let $E\subset \RR^d$ and $f\in\mathcal{E}_1^d\setminus \{0\}$ be such that \eqref{7-1} is satisfied. Then
\[
\frac12\int_{\mathbb{R}^{d}}|f(x)|\,dx\le \int_{E}|f(x)|\,dx\le |E|\,\|f\|_{\infty}\le
|E|\mathcal{N}(\RR^d)_{1,\infty}\|f\|_{1}.
\]
This in particular  implies that
\[
\alpha_{d}^{*}\ge \frac{1}{2\mathcal{N}(\RR^d)_{1}},
\]
which further implies the lower estimate $\al_d^\ast \ge \frac{(2\pi)^{d}}{2V_{d}\mathcal{I}_{d/2-1}}$  because
\[
\frac{(2\pi)^d}{V_d}\,\mathcal{N}(\RR^d)_{1}= \mathcal{L}^\ast(d)\le
\mathcal{I}_{d/2-1}.
\]
To show the corresponding upper estimate, let  $f_{d}$ be the function given  in \eqref{fd}.  Then
\[
0=\int_{\mathbb{R}^{d}}f_{d}(x)\,dx=\int_{|x|\le r_{d}}|f_{d}(x)|\,dx-
\int_{|x|\ge r_{d}}|f_{d}(x)|\,dx,
\]
and hence,
\[
\int_{|x|\le
r_{d}}|f_{d}(x)|\,dx=\frac12\int_{\mathbb{R}^{d}}|f_{d}(x)|\,dx.
\]
This  together with \eqref{thm-7-1} implies the upper estimate:
\[
\alpha_{d}^{*}\le (r_{d})^{d}V_{d}\leq (2q_{\f d2-1, 1})^d V_d.
\]

Finally, we prove \eqref{7-3}.  Note that the lower asymptotic estimate in \eqref{7-3}  follows  directly from Corollary \ref{cor-1-1}.  The upper estimate as  $d\to \infty$ follows from the upper estimate in \eqref{7-3-0} since
\[
\frac{V_{d}^{2}(r_{d})^{d}}{(2\pi)^{d}}=
\frac{\pi^{d}(2q_{d/2-1,1})^{d}}{(2\pi)^{d}\Gamma^{2}(d/2+1)}
\sim \frac{(d/2)^{d}}{(d/(2e))^{d}}=e^{d}.
\]
\end{proof}

}

\end{document}